\DeclareMathSymbol\nullset{\mathord}{AMSb}{"3F}
\def\set#1\endset{\{\,#1\,\}}
\let\iso=\simeq
\def\from{\mkern2mu\hbox{\rm :}\mkern2mu}
\def\comp{\mkern2mu\mathchoice%
        {\raise.35ex\hbox{$\scriptscriptstyle\circ$}}
        {\raise.35ex\hbox{$\scriptscriptstyle\circ$}}
        {\raise.14ex\hbox{$\scriptscriptstyle\circ$}}
        {\raise.14ex\hbox{$\scriptscriptstyle\circ$}}}
\def\choice#1,#2{\binom{#1}{#2}}
\def\kzeroxarg#1{k\langle {#1}\rangle_0}
\def\kzerox{\kzeroxarg{X}}
\def\kzeroxn#1{\kzeroxarg{#1}}
\def\ckzerox{k[X]_0}
\def\onekzerox{k[x]_0}
\def\onekonex{k[x]}
\def\konex{k\langle X\rangle}
\def\gf#1{\mathbb{F}_{#1}}
\def\gftwoz{\gf{2}[x]_0}
\def\gftwoxz{\gf{2}\langle X\rangle_0}
\def\unitaryw#1{\setbox0=\hbox{$W$}\hbox to \wd0{$W^u$\hss}_{#1}}
\def\unitaryv#1{\setbox0=\hbox{$V$}\hbox to \wd0{$V^u$\hss}_{#1}}
\let\cong=\equiv
\def\mod#1{\,\,(\text{mod}\,#1)}
\newcounter{parts}
\begin{document}

 \newtheorem{theorem}{Theorem}[section]
 \newtheorem{corollary}{Corollary}[section]
 \newtheorem{lemma}{Lemma}[section]
 \newtheorem{proposition}{Proposition}[section]
 {\newtheorem{definition}{Definition}[section]}


\title{Maximal $T$-spaces of a free associative algebra}
\author{C. Bekh-Ochir and S. A. Rankin}

\maketitle  

\begin{abstract}
 We study the lattice of $T$-spaces 
 of a free associative $k$-algebra over a nonempty set. It is shown 
 that when the field $k$ is infinite, then the lattice has a maximum element, and
 that maximum element is in fact a $T$-ideal.
 In striking contrast, it is then proven that when the field $k$ is finite, 
 the lattice of $T$-spaces has infinitely many 
 maximal elements (of which exactly two are $T$-ideals). Similar 
 results are also obtained for the free unitary associative $k$-algebras.
 The proof is based on the observation that there is a natural bijection
 between the sets of maximal $T$-spaces of the free associative $k$-algebras
 over a nonempty set $X$ and over a singleton set. This permits the
 transfer of results from the study of the lattice of $T$-spaces of
 the free associative $k$-algebra over a one-element set to the general 
 case. 
\end{abstract}

%

 
\section{Introduction}
 Let $k$ be a field, and let $A$ be an associative $k$-algebra. 
 A\hbox{.} V\hbox{.} Grishin introduced the concept of a $T$-space  of
 $A$ (\cite{Gr1}, \cite{Gr2}); namely, a linear subspace of $A$ that is
 invariant under the natural action of the transformation monoid  $T$ of
 all $k$-algebra endomorphisms of $A$. A $T$-space of $A$ that is also an
 ideal of  $A$ is called a $T$-ideal of $A$.  For any
 $H\subseteq A$, the smallest $T$-space  of $A$ containing $H$ shall be
 denoted by $H^S$, while the smallest $T$-ideal of $A$ that  contains
 $H$ shall be denoted by $H^T$. The set of all $T$-spaces of $A$ forms a
 lattice under the inclusion ordering, and we shall denote this 
 lattice by $L(A)$. 

 We shall let $\kzerox$ and $\konex$ denote the free, respectively free
 unitary, associative $k$-algebras on a set $X$. Our interest in this
 paper shall be the study of the maximal elements in the lattices
 $L(\kzerox)$ and $L(\konex)$ when $X$ is a nonempty set. We show that
 if $k$ is infinite, then the unique maximal $T$-ideal of $\kzerox$
 (that is, there is a maximum $T$-ideal) is also the unique maximal
 $T$-space. We then demonstrate that the story is strikingly different
 when $k$ is finite. We establish that there
 is a natural bijection between the sets of maximal $T$-spaces of
 $\kzerox$ and $\onekzerox$, which then allows us to focus on
 the study of the maximal $T$-spaces of $\onekzerox$. We prove that when
 $k$ is finite, there are infinitely many maximal $T$-spaces of
 $\onekzerox$ (and thus infinitely many maximal $T$-spaces of
 $\kzerox$). Our approach requires that we treat the case for $p>2$ and
 $p=2$ separately.

 We are able to adapt this analysis to determine that in the case
 of an infinite field $k$,  $\konex$ has a maximum proper
 $T$-ideal, and a maximum proper $T$-space (which of course contains the
 maximum proper $T$-ideal), so the situation is essentially the same as 
 that of the free associative $k$-algebra over $X$. In the case of a finite field, there
 is a slight difference, in that this time, there is a maximum
 proper $T$-ideal (as opposed to two maximal proper $T$-ideals in the
 non-unitary case). We then go on to prove that there are infinitely
 many maximal $T$-spaces of $\konex$ that contain this maximum proper
 $T$-ideal (actually, in this case, every maximal $T$-space contains the
 maximum $T$-ideal since the maximum $T$-ideal is $T^{(2)}$, and the proof
 of Proposiiton \ref{proposition: max contains t2} is also applicable for
 $T$-spaces of $\konex$).
 
\begin{lemma}\label{lemma: free have maximal}
 Let $A$ be a free associative or free commutative associative $k$-algebra
 on a nonempty set $X$. Then  every proper $T$-space ($T$-ideal) of $A$ is contained in a
 maximal $T$-space ($T$-ideal) of $A$.
\end{lemma}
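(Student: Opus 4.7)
The plan is to apply Zorn's lemma to the poset of proper $T$-spaces of $A$ that contain the given proper $T$-space, ordered by inclusion. Two things need to be checked: that the union of a chain of $T$-spaces is a $T$-space, and that the union of a chain of \emph{proper} $T$-spaces remains proper. The first is routine, since each of the defining conditions of a $T$-space (closure under scalar multiplication, addition, and application of an endomorphism) involves only finitely many elements at a time and is therefore preserved by directed unions.

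The crux is preserving properness, and my intended route is via the characterization: a $T$-space $W$ of $A$ is proper if and only if $W\cap X = \emptyset$. One direction is immediate because $X\subseteq A$ and $X\neq\emptyset$. For the other direction, suppose some $x\in X$ lies in $W$. By freeness of $A$ on $X$, every $a\in A$ is the image of $x$ under some $k$-algebra endomorphism of $A$ (define the endomorphism by sending $x$ to $a$ and every other generator to $0$, then extend by the universal property). Since $W$ is closed under endomorphisms, this forces $W=A$. Given this characterization, if a chain $(W_i)$ of proper $T$-spaces had $x\in\bigcup_i W_i$ for some $x\in X$, then some $W_i$ would contain $x$ and hence fail to be proper, a contradiction; so the union is proper and Zorn's lemma applies.

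The $T$-ideal case follows by the same argument, with the additional routine observation that an increasing union of ideals is an ideal. The same reasoning also goes through verbatim in the free commutative associative setting, because all that was used was the universal property of the free object on $X$ and the nonemptiness of $X$. The only step that requires any real thought is the characterization of properness in terms of disjointness from $X$; the remainder is standard Zorn bookkeeping, so I do not anticipate any substantial obstacle.
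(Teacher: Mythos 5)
Your proof is correct and follows essentially the same route as the paper's: both rest on Zorn's lemma together with the key observation that a $T$-space containing a generator $x\in X$ must equal $A$ by freeness. The only cosmetic difference is that the paper applies Zorn to the poset of $T$-spaces containing $V$ that do not contain a fixed $x\in X$ (making the chain condition trivial), whereas you apply it directly to the proper $T$-spaces containing $V$ and verify that unions of chains stay proper via the same observation.
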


\begin{proof}
 The proof for $T$-ideals is completely 
 analogous to the proof for $T$-spaces, and we shall present
 only the argument for $T$-spaces.
 Let $V$ be a proper $T$-space of $A$. Since $A$ is free on $X$, $V\cap X
 =\nullset$. Let $x\in X$ and consider the sub-partially ordered (poset) $P$ of $L(A)$ whose 
 elements are the $T$-spaces of $A$ that do not contain $x$ but do contain 
 $V$. Zorn's lemma may be  applied to $P$, so we conclude that $P$ has
 maximal elements. Let $M$ be any maximal element of $P$. If $M$ is not 
 maximal in $L(A)$, then there exists a proper $T$-space $U$ of $A$ that
 contains $M$, so $U\notin P$ and thus $x\in U$.  Since $x\in U$ and $A$
 is free on $X$, we conclude that $U=A$, which contradicts our choice of
 $U$. Thus $M$ is maximal in $L(A)$.
\end{proof}
 
We shall have frequent occasion to consider sets $X$ and $Y$ with $X\subseteq Y$.
In general, for $U\subseteq \kzerox$, when required for clarity, we shall write $U^T_X$,
rather than $U^T$, to denote the $T$-ideal of $\kzerox$ that is generated by $U$.

\begin{lemma}\label{lemma: extending t ideal}
 Let $X$ and $Y$ be nonempty sets with $X\subseteq Y$, and let $U\subseteq \kzerox$. 
 Then $U^T_X=U^T_Y\cap \kzerox$.
\end{lemma}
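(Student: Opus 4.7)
The plan is to prove the two inclusions separately. The inclusion $U^T_X \subseteq U^T_Y \cap \kzerox$ is the easier direction: I would verify that $U^T_Y \cap \kzerox$ is a $T$-ideal of $\kzerox$ that contains $U$, which gives the inclusion by the definition of $U^T_X$ as the smallest such. Containment of $U$ is immediate, and the intersection of $\kzerox$ with an ideal of $\kzeroxn{Y}$ is an ideal of $\kzerox$. For invariance under an arbitrary endomorphism $\phi\from \kzerox\to \kzerox$, I would extend $\phi$ to an endomorphism $\tilde\phi$ of $\kzeroxn{Y}$ by setting $\tilde\phi(y)=y$ for $y\in Y\setminus X$; then for $f\in U^T_Y\cap \kzerox$ we have $\phi(f)=\tilde\phi(f)\in U^T_Y$, and $\phi(f)\in \kzerox$, giving the required closure.

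The substantive direction is $U^T_Y\cap \kzerox \subseteq U^T_X$. The idea is to use a retraction. By the universal property of $\kzeroxn{Y}$, there is a $k$-algebra homomorphism $\pi\from \kzeroxn{Y}\to \kzerox$ with $\pi(x)=x$ for $x\in X$ and $\pi(y)=0$ for $y\in Y\setminus X$. For any $f\in U^T_Y\cap \kzerox$, we have $\pi(f)=f$. Writing $f$ as a finite sum
\[
  f=\sum_i g_i\,\psi_i(u_i)\,h_i
\]
with $u_i\in U$, $\psi_i\in \mathrm{End}(\kzeroxn{Y})$, and $g_i,h_i\in \kzeroxn{Y}$, and applying $\pi$, I obtain
\[
  f=\sum_i \pi(g_i)\,\pi(\psi_i(u_i))\,\pi(h_i).
\]

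The key observation that finishes the argument is that $\pi\comp\psi_i$ restricted to $X$ extends (by freeness of $\kzerox$) to an endomorphism $\chi_i$ of $\kzerox$, and since $u_i\in U\subseteq \kzerox$, two $k$-algebra homomorphisms $\kzerox\to \kzerox$ that agree on $X$ must agree everywhere, so $\pi(\psi_i(u_i))=\chi_i(u_i)$. Thus $\chi_i(u_i)\in U^T_X$, and since $\pi(g_i),\pi(h_i)\in \kzerox$ and $U^T_X$ is an ideal of $\kzerox$, each summand lies in $U^T_X$. Hence $f\in U^T_X$, completing the proof. The main subtlety is not a calculation but rather identifying the retraction $\pi$ and then verifying, via the freeness of $\kzerox$, that the non-endomorphism $\pi\comp\psi_i$ nevertheless acts on elements of $\kzerox$ as an honest endomorphism of $\kzerox$ would.
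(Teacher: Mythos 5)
Your proof is correct and takes essentially the same route as the paper's: the easy inclusion via extending endomorphisms of $\kzerox$ to $\kzeroxn{Y}$, and the substantive one via the retraction $\pi$ killing the variables in $Y\setminus X$, together with the observation that $\pi\comp\psi_i$ restricted to $\kzerox$ is an honest endomorphism of $\kzerox$ (the paper's $g\comp f_i\comp\iota$). The only quibble is that in the non-unitary setting the decomposition of $f$ should allow the outer factors $g_i,h_i$ to be absent, i.e.\ to lie in $\kzeroxn{Y}\cup\{1\}$; this changes nothing in the argument.
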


\begin{proof}
 Since every algebra endomorphism of $\kzerox$ extends to an algebra endomorphism of 
 $\kzeroxn{Y}$, it follows that $U^T_X\subseteq U^T_Y$, and thus $U^T_X\subseteq U^T_Y
 \cap \kzerox$. Accordingly, it suffices to prove that $U^T_Y\cap \kzerox\subseteq U^T_X$. 
 Let $u\in U^T_Y\cap \kzerox$. Then there exist $\alpha_i\in k$, $f_i\from \kzeroxn{Y}
 \to \kzeroxn{Y}$, $u_i\in U$, and $y_i,z_i\in \kzeroxn{Y}\cup \set 1\endset$ with 
 $u=\sum \alpha_iy_if_i(u_i)z_i$. Let $g\from \kzeroxn{Y}\to \kzeroxn{Y}$ be the map 
 determined by $x\mapsto x$ if $x\in X$, while $x\mapsto 0$ if $x\in Y-X$. As well, 
 let $\iota\from \kzerox\to \kzeroxn{Y}$ be the map determined by $\iota(x)=x$ for 
 each $x\in X$. Then since $u\in \kzerox$, we have $u=g(u)=\sum \alpha_i g(y_i)g\comp
 f_i(u_i)g(z_i)$, and since $u_i\in U$, we have $u_i=\iota(u_i)$, so 
 $u=\sum \alpha_i g(y_i)g\comp f_i\comp \iota(u_i)g(z_i)$. Since $g\comp f_i\comp\iota\from 
 \kzerox\to\kzerox$, $u_i\in U$, and $g(y_i),g(z_i)\in \kzerox$ for every $i$, it 
 follows that $u\in U$.
\end{proof} 

\begin{proposition}\label{proposition: comparison of t-ideals}   
 Let $X$ and $Y$ be nonempty sets with $X\subseteq Y$. The map $U\mapsto U_Y^T$ from the lattice of
 $T$-ideals of $\kzerox$ into the lattice of $T$-ideals of $\kzeroxn{Y}$ is injective, and moreover, if $U_Y^T$ is
 a maximal $T$-ideal in $\kzeroxn{Y}$, then $U$ is a maximal $T$-ideal in $\kzerox$. If $X$ is infinite,
 then the map is surjective and thus a lattice isomorphism.
\end{proposition}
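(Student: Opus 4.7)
The plan is to let Lemma \ref{lemma: extending t ideal} do the heavy lifting. For injectivity, I would take $T$-ideals $U_1,U_2$ of $\kzerox$ with $(U_1)_Y^T=(U_2)_Y^T$; since each $U_i$ is already a $T$-ideal of $\kzerox$ one has $(U_i)_X^T=U_i$, so intersecting with $\kzerox$ and invoking the lemma yields $U_1=(U_1)_Y^T\cap\kzerox=(U_2)_Y^T\cap\kzerox=U_2$. For the maximality assertion I would argue by contrapositive: if $U$ fails to be maximal in $\kzerox$, choose a $T$-ideal $W$ with $U\subsetneq W\subsetneq \kzerox$. Then $W_Y^T$ strictly contains $U_Y^T$ by injectivity, and by the lemma $W_Y^T\cap\kzerox=W\neq\kzerox$, so $W_Y^T$ is a proper $T$-ideal of $\kzeroxn{Y}$ strictly above $U_Y^T$, contradicting maximality of the latter.

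For surjectivity when $X$ is infinite, given a $T$-ideal $V$ of $\kzeroxn{Y}$ the natural candidate preimage is $U=V\cap\kzerox$. I would first check that $U$ is a $T$-ideal of $\kzerox$: any endomorphism $f$ of $\kzerox$ extends to an endomorphism of $\kzeroxn{Y}$ by sending every $y\in Y-X$ to $0$, and that extension stabilises $V$, so $f$ stabilises $V\cap\kzerox$. The inclusion $U_Y^T\subseteq V$ is automatic, so the content lies in the reverse inclusion. Given $v\in V$, the element $v$ mentions only finitely many variables, of which say $y_1,\ldots,y_m\in Y-X$ are the ones outside $X$. Since $X$ is infinite, I can pick distinct $x_1,\ldots,x_m\in X$ that do not occur in $v$, and define endomorphisms $\phi,\psi$ of $\kzeroxn{Y}$ by $\phi(y_i)=x_i$ and $\psi(x_i)=y_i$ (each fixing every other generator). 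Then $\phi(v)\in V\cap\kzerox=U$, while the freshness of the $x_i$ gives $\psi(\phi(v))=v$, so $v=\psi(\phi(v))\in U_Y^T$.

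The map is clearly order-preserving, and so is the assignment $V\mapsto V\cap\kzerox$ that serves as its inverse, so the resulting bijection of posets is automatically a lattice isomorphism. The main obstacle is the surjectivity step: the substitution $\phi$ can be inverted by $\psi$ only because enough variables in $X$ remain free to serve as proxies for the $y_i\in Y-X$ appearing in $v$, which is exactly the role played by the infiniteness of $X$.
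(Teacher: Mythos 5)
Your proof is correct and follows essentially the same route as the paper: injectivity and the transfer of maximality both rest on Lemma \ref{lemma: extending t ideal}, and surjectivity is obtained by renaming the finitely many variables of $v$ lying in $Y-X$ into fresh variables of $X$ (the paper packages your pair $\phi,\psi$ into a single automorphism $\sigma$ of $\kzeroxn{Y}$ with $\sigma(v)\in\kzerox$, but the substance is identical). The only divergence is the maximality step, where you argue by contrapositive directly from injectivity and monotonicity of $U\mapsto U_Y^T$, whereas the paper takes a maximal $T$-ideal $M\supseteq U$ (whose existence comes from Lemma \ref{lemma: free have maximal}) and shows $U=M$; your version is marginally more economical since it avoids that existence appeal.
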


\begin{proof}
 By Lemma \ref{lemma: extending t ideal}, the map is injective. 
 Suppose that $U$ is a $T$-ideal of $\kzerox$ such that $U_Y^T$ is a maximal $T$-ideal of $\kzeroxarg{Y}$.
 Let $M$ be a maximal 
 $T$-ideal of $\kzerox$ with $U\subseteq M$. By Lemma \ref{lemma: extending t ideal},
 $M=M^T_Y\cap \kzerox$, so $M^T_Y\ne \kzeroxn{Y}$. Since $U\subseteq M$, we have 
 $U^T_Y\subseteq M^T_Y$ and $U^T_Y$ is maximal, so $U^T_Y=M^T_Y$. Thus 
 $U=U^T_Y\cap \kzerox=M^T_Y\cap \kzerox=M$, and so $U$ is maximal, as required.
 Finally, suppose that $X$ is infinite, and let $V$ be a $T$-ideal of $\kzeroxn{Y}$.
 Then $U=V\cap\kzerox$ is a $T$-ideal of $\kzerox$ and $U_Y^T\subseteq V$. We claim
 that $V\subseteq U_Y^T$. Let $f\in V$. Then since $X$ is infinite, there exists a 
 $k$-algebra automorphism $\sigma$ of $\kzeroxn{Y}$ such that $\sigma(f)\in \kzerox$.
 Since $V$ is a $T$-ideal of $\kzeroxn{Y}$, we have $\sigma(f)\in V$ and thus
 $\sigma(f)\in U$. But then $f=\sigma^{-1}(\sigma(f))\in U_Y^T$, as required.
\end{proof}

\begin{definition}
 For any nonempty set $X$, let $Z_X=\set xy\endset^T$ if $|X|>1$, otherwise let
 $Z_X=\set x^2\endset^T$, where $X=\set x\endset$.
\end{definition}
 
\begin{corollary}\label{corollary: gen of [S] z result}
 For any nonempty set $X$, $Z_X$ is a maximal $T$-ideal of $\kzerox$, and if
 $k$ is infinite, then $Z_X$ is in fact the maximum $T$-ideal of $\kzerox$.
\end{corollary}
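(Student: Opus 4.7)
The plan is to first identify $Z_X$ with $\kzerox^2$, the $k$-subspace of $\kzerox$ spanned by monomials of length at least two. When $|X|>1$, $\kzerox^2$ is visibly a $T$-ideal containing $xy$, and conversely any $T$-ideal containing $xy$ contains $\phi(x)\phi(y)=fg$ for all $f,g\in\kzerox$, so equals $\kzerox^2$. The case $|X|=1$ is analogous: $\{x^2\}^T$ contains every $x^n$ with $n\ge 2$ and every $f^2$ for $f\in\onekzerox$, and it coincides with $\onekzerox^2$. Under this identification, the quotient $\kzerox/Z_X$ is the free $k$-module on the classes $\bar x$, $x\in X$, equipped with zero multiplication.

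Maximality of $Z_X$ then follows from a quotient analysis. Every $k$-linear endomorphism of $\kzerox/Z_X$ lifts to a $k$-algebra endomorphism of $\kzerox$, simply by sending each $x\in X$ to a degree-one representative of the image of $\bar x$. Hence the induced action of the endomorphism monoid of $\kzerox$ on the quotient is by all $k$-linear endomorphisms, and the only subspaces of a free $k$-module stable under every linear endomorphism are $0$ and the whole module. Consequently the only $T$-ideals of $\kzerox$ containing $Z_X$ are $Z_X$ itself and $\kzerox$, so $Z_X$ is proper and maximal.

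For the ``maximum'' claim when $k$ is infinite, the plan is to show that any proper $T$-ideal $W$ of $\kzerox$ satisfies $W\subseteq Z_X$. The key input is that over an infinite field every $T$-ideal is graded by total degree: applying the scaling endomorphism $x\mapsto\lambda x$ for $\lambda\in k$ and Vandermonde-inverting the resulting system over sufficiently many distinct scalars recovers each homogeneous component of any element of $W$ as an element of $W$. If some $v\in W$ failed to lie in $Z_X$, its degree-one component $v_1=\sum_{x\in X}\alpha_x x$ would be a nonzero element of $W$; substituting $0$ for all variables except some $x_0$ with $\alpha_{x_0}\ne 0$ yields $x_0\in W$, which in a free algebra forces $W=\kzerox$, contradicting properness. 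The main obstacle is establishing $Z_X=\kzerox^2$ uniformly across the cases $|X|=1$ and $|X|>1$, and deploying the Vandermonde/homogeneity step cleanly; the remaining manipulations with endomorphisms of the free algebra are routine.
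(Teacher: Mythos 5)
Your proposal is correct, but it takes a genuinely different route from the paper's. The paper proves this corollary by embedding $X$ in an infinite set $Y$, citing Theorem 3 of \cite{S} for the corresponding statement about $Z_Y$ in $\kzeroxn{Y}$, and then transferring the result back to $\kzerox$ via the identity $Z_X=Z_Y\cap\kzerox$ of Lemma \ref{lemma: extending t ideal} together with Proposition \ref{proposition: comparison of t-ideals}. You instead argue directly inside $\kzerox$: you identify $Z_X$ with $(\kzerox)^2$ in both the cases $|X|=1$ and $|X|>1$ (this is right --- the $T$-ideal generated by $xy$ contains $\phi(x)\phi(y)=fg$ for all $f,g\in\kzerox$, and $(\kzerox)^2$ is itself a $T$-ideal containing the generator), note that the quotient is the vector space on $X$ with zero multiplication on which the endomorphism monoid of $\kzerox$ induces the full monoid of $k$-linear endomorphisms, and conclude maximality from the fact that a vector space has no nonzero proper subspace invariant under all linear maps. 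For the maximum claim over an infinite field, your Vandermonde/homogeneity argument is essentially the paper's own Proposition \ref{proposition: infinite field t-space} (proved there for the stronger statement about $T$-spaces) specialized to $T$-ideals, and it is sound: the degree-one component of any $v\in W\setminus Z_X$ lies in $W$, and killing all but one variable then forces a generator into $W$. What your approach buys is self-containedness: no appeal to \cite{S} and no transfer machinery. What the paper's approach buys is economy later on: the same transfer lemmas are reused for Corollary \ref{corollary: finite field version} (that $Z$ and $W_0$ are the \emph{only} maximal $T$-ideals over a finite field), a uniqueness statement that your quotient argument for $Z_X$ alone does not address.
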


\begin{proof}
 Let $Y$ be an infinite set with $X\subseteq Y$. By Theorem 3 of \cite{S},
 $Z_Y$ is a maximal $T$-ideal of $\kzeroxn{Y}$ and in fact, is the
 maximum $T$-ideal of $\kzeroxn{Y}$ if $k$ is infinite. By Lemma \ref{lemma: extending t ideal},
 $Z_X=Z_Y\cap\kzerox$, and thus by Proposition \ref{proposition: comparison of t-ideals}, $Z_X$ is
 a maximal $T$-ideal of $\kzerox$. If $k$ is infinite and $U$ is a maximal $T$-ideal of $\kzerox$,
 then $U^T_Y\subseteq Z_Y$ and so $U=U^T_Y\cap \kzerox\subseteq Z_Y\cap \kzerox=Z_X$. As $U$ 
 is maximal, we must have $U=Z_X$.
\end{proof} 

 In the proof of Corollary \ref{corollary: gen of [S] z result}, it was observed that
 $Z_X=Z_Y\cap \kzerox$. Consequently, in a bid to simplify notation, from now on for 
 any nonempty set $X$, we shall write $Z$ in place of $Z_X$ when no confusion can 
 result from doing so.

\begin{definition}
 Let $X$ be any nonempty set. In $\kzerox$, if $|X|=1$, let $T^{(2)}=\set 0\endset$, 
 otherwise let $x,y\in X$ with $x\ne y$ and set $T^{(2)}=\set [x,y]\endset^{T_X}$.
\end{definition}

\begin{definition}
 Let $X$ be a nonempty set, and let $k$ be a finite field of order $q$. For any 
 $x\in X$, let $W_0=T^{(2)}+\set x-x^q\endset^T_X$.
\end{definition}

 Theorem 3 of \cite{S} also implies that if $X$ is infinite and $k$ is
 finite of order $q$, then $W_0$ is a maximal $T$-ideal of $\kzerox$,
 and furthermore, that $Z$ and $W_0$ are the only maximal $T$-ideals of
 $\kzerox$. 

 We remark that when we are considering nonempty sets $X\subseteq Y$ and we refer to $T^{(2)}$, 
 we shall rely on the context to determine whether we mean $T^{(2)}\subseteq \kzerox$ or 
 $T^{(2)}\subseteq \kzeroxarg{Y}$.

\begin{corollary}\label{corollary: finite field version}
 Let $k$ be a finite field of order $q$, and let $X$ be a nonempty set. 
 Then $Z$ and $W_0$ are maximal $T$-ideals of $\kzerox$, and these are 
 the only maximal $T$-ideals of $\kzerox$.
\end{corollary}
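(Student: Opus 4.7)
My plan is to reduce to the infinite-variable case by embedding $X$ in an infinite set $Y$ and applying Theorem 3 of \cite{S}, which classifies the maximal $T$-ideals of $\kzeroxn{Y}$ as exactly $Z_Y$ and $W_0$ (where the latter is interpreted in $\kzeroxn{Y}$ using the same $x\in X\subseteq Y$). The transfer mechanism will be Lemma \ref{lemma: extending t ideal}, which identifies $T$-ideals of $\kzerox$ with the intersections with $\kzerox$ of their extensions.

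The key technical identity is $W_0^{(Y)}\cap\kzerox=W_0$, where I write $W_0^{(Y)}$ for the version in $\kzeroxn{Y}$; the analogue $Z_Y\cap\kzerox = Z$ is already recorded in Corollary \ref{corollary: gen of [S] z result}. Because $W_0$ is a sum of two $T$-ideals, Lemma \ref{lemma: extending t ideal} does not apply directly, so I would adapt its proof, using the endomorphism $g\from\kzeroxn{Y}\to\kzeroxn{Y}$ that is the identity on $X$ and kills $Y-X$. Given $u=u_1+u_2\in W_0^{(Y)}\cap\kzerox$ with $u_1\in T^{(2)}$ and $u_2\in\set x-x^q\endset^T_Y$, applying $g$ gives $u = g(u_1)+g(u_2)$, and one checks that $g(u_1)\in T^{(2)}\cap\kzerox$ and $g(u_2)\in\set x-x^q\endset^T_X$ (the latter by Lemma \ref{lemma: extending t ideal}), so $u\in W_0$. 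A minor wrinkle when $|X|=1$ is that $T^{(2)}\cap\kzerox$ must be verified to equal $\set 0\endset$, which follows by passing to the free commutative quotient.

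Next, both $Z$ and $W_0$ are proper in $\kzerox$: if, say, $W_0=\kzerox$, then $X\subseteq W_0^{(Y)}$, and $T$-invariance propagates this to $Y\subseteq W_0^{(Y)}$, contradicting properness of $W_0^{(Y)}$; similarly for $Z$. The classification is then immediate: for any maximal $T$-ideal $M$ of $\kzerox$, the extension $M^T_Y$ is proper (since $M^T_Y\cap\kzerox=M$) and so by Lemma \ref{lemma: free have maximal} lies inside some maximal $N\in\set Z_Y,W_0^{(Y)}\endset$ of $\kzeroxn{Y}$; hence $M\subseteq N\cap\kzerox\in\set Z,W_0\endset$, and maximality of $M$ together with properness forces $M\in\set Z,W_0\endset$. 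Maximality of $Z$ is Corollary \ref{corollary: gen of [S] z result}; for $W_0$, Lemma \ref{lemma: free have maximal} produces a maximal $T$-ideal containing it, which by the classification is $Z$ or $W_0$, and the possibility $W_0\subseteq Z$ is ruled out by noting that $x-x^q\in W_0$ has a nonzero linear part while no element of $Z$ does (the same observation gives $Z\ne W_0$).

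The main obstacle is the identity $W_0^{(Y)}\cap\kzerox = W_0$, particularly the $|X|=1$ subcase: since $W_0$ is a sum of $T$-ideals rather than singly generated, Lemma \ref{lemma: extending t ideal} does not suffice, and the differing definitions of $T^{(2)}$ in $\kzerox$ and $\kzeroxn{Y}$ in that subcase require a short separate check. Everything else reduces to a direct application of the previously established lemmas.
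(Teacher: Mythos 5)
Your proposal is correct and follows essentially the same route as the paper: embed $X$ in an infinite set $Y$, invoke Theorem 3 of \cite{S} for the classification in $\kzeroxn{Y}$, and transfer back via the extension--contraction correspondence of Lemma \ref{lemma: extending t ideal}, the key technical point in both arguments being the identification of $W_0$ with the contraction to $\kzerox$ of $T^{(2)}+\set x-x^q\endset^T_Y$. The only immaterial differences are that you compute the contraction $(T^{(2)}+\set x-x^q\endset^T_Y)\cap\kzerox$ directly where the paper first computes the extension $(W_0)^T_Y$, and that you deduce maximality of $W_0$ from the classification together with $W_0\not\subseteq Z$ rather than from the descent of maximality in Proposition \ref{proposition: comparison of t-ideals}.
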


\begin{proof}
 Let $Y$ be an infinite set containg $X$. We observe that
 for $x\in X$, $(T^{(2)}+\set x-x^q\endset^T_X)^T_Y =(T^{(2)})^T_Y
 +(\set x-x^q\endset^T_X)^T_Y= T^{(2)}+\set x-x^q\endset^T_Y$. By Theorem 3 of \cite{S} 
 for countably infinite $Y$ in combination with Proposition \ref{proposition: comparison of t-ideals}
 for arbitrary infinite $Y$, $T^{(2)}+\set x-x^q\endset^T_Y$
 is maximal in $\kzeroxn{Y}$. Thus $T^{(2)}+\set x-x^q\endset^T_X$
 is maximal in $\kzerox$. Now, if $U$ is maximal in $\kzerox$, then $U^T_Y$ is 
 contained in either $Z_Y$, in which case $U\subseteq Z_Y\cap \kzerox=Z_X$ and 
 thus $U=Z_X$, or else $U^T_Y$ is contained in $T^{(2)}+\set x-x^q\endset^T_Y$,
 in which case $U$ is contained in $(T^{(2)}+\set x-x^q\endset^T_Y)\cap \kzerox
 =T^{(2)}+\set x-x^q\endset^T_X=W_0$ and so $U=W_0$.
\end{proof} 

\begin{proposition}\label{proposition: max contains t2}
 Let $X$ denote any nonempty set. Then every maximal $T$-space of 
 $\kzerox$ contains $T^{(2)}$.
\end{proposition}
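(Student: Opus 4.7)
The plan is a proof by contradiction. I would suppose $M$ is a maximal $T$-space of $\kzerox$ with $T^{(2)}\not\subseteq M$. Since $T^{(2)}$ is a $T$-ideal, it is in particular a $T$-space, so $M+T^{(2)}$ is a $T$-space strictly containing $M$; by maximality, $M+T^{(2)}=\kzerox$. The strategy from here is to extract some $x\in X$ out of this decomposition modulo $T^{(2)}$, and then use freeness of $\kzerox$ on $X$ to blow $M$ up to all of $\kzerox$, contradicting properness. The case $|X|=1$ is immediate since $T^{(2)}=\set 0\endset$ by definition, so I may assume $|X|\ge 2$.

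For the main case I would fix $x\in X$ and write $x=m+t$ with $m\in M$ and $t\in T^{(2)}$. The key move is to apply the endomorphism $\phi$ of $\kzerox$ determined by $\phi(x)=x$ and $\phi(y)=0$ for every $y\in X-\set x\endset$. Then $\phi(m)\in M$ since $M$ is a $T$-space, and $\phi(t)\in T^{(2)}$ since $T^{(2)}$ is a $T$-ideal; moreover both lie in the one-variable subalgebra of $\kzerox$ generated by $x$, which is commutative and naturally isomorphic to $\onekzerox$. I would then invoke the supporting observation $T^{(2)}\cap\onekzerox=\set 0\endset$, which holds because the quotient $\kzerox\to\kzerox/T^{(2)}$ is the abelianization and restricts on the one-variable subalgebra to the identity (single-variable free associative coincides with single-variable free commutative), hence is injective there. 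This forces $\phi(t)=0$, and consequently $x=\phi(x)=\phi(m)\in M$.

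Finally, once $x\in M$, for any $u\in\kzerox$ the endomorphism $\psi$ determined by $\psi(x)=u$ and $\psi(y)=0$ for $y\ne x$ is a well-defined algebra endomorphism with $\psi(x)=u$, so $u=\psi(x)\in M$. Thus $M=\kzerox$, contradicting the properness of $M$, and the proposition follows. I expect the only delicate ingredient to be making the identification $T^{(2)}\cap\onekzerox=\set 0\endset$ precise; however, since this is just the universal property of the abelianization together with the coincidence of free associative and free commutative in one variable, no serious obstacle is anticipated.
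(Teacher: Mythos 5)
Your proposal is correct and follows essentially the same route as the paper: the paper also writes $x=f+g$ with $f\in U$, $g\in T^{(2)}$, passes to the components depending only on $x$ (your substitution $\phi$ makes this explicit), and concludes $g=0$ because a one-variable element of $T^{(2)}$ vanishes, forcing $x\in U$ and hence $U=\kzerox$. Your justification of $T^{(2)}\cap\onekzerox=\set 0\endset$ via the abelianization being injective on the one-variable subalgebra is a sound way to make precise the step the paper leaves implicit.
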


\begin{proof}
 There is nothing to prove if $|X|=1$, so suppose that $|X|>1$.
 Let $U$ be a maximal $T$-space of $\kzerox$, and suppose that $U$ does not contain
 $T^{(2)}$. Then $U+T^{(2)}=\kzerox$, and so 
 for any $x\in X$, $x=f+g$ for some essential
 $f\in U$ and essential $g\in T^{(2)}$. But then $g$ depends only on
 $x$, and so $g=0$. Thus $x\in U$, which means that $U=\kzerox$. Since
 this is not the case, it follows that $T^{(2)}\subseteq U$.
\end{proof} 

\begin{proposition}\label{proposition: infinite field t-space}
 Let $X$ denote any nonempty set. If $k$ is infinite, then every 
 proper $T$-space of $\kzerox$ is contained in $Z$.
\end{proposition}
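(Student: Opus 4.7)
The plan is to prove the contrapositive: if a $T$-space $V\subseteq\kzerox$ is not contained in $Z$, then $V=\kzerox$.

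First I would identify $Z$ explicitly as the set of elements of $\kzerox$ whose total-degree-$1$ homogeneous component vanishes. The inclusion $Z\subseteq\{f : f_1=0\}$ is immediate from the definition, since every generator of $Z$ has degree $\ge 2$ and the $T$-ideal operations preserve this lower bound. The reverse inclusion follows by writing any monomial of degree $\ge 2$ as a substitution instance of $xy$ (respectively $x^2$ when $|X|=1$): the factorization $x_{i_1}\cdots x_{i_n}=x_{i_1}\cdot(x_{i_2}\cdots x_{i_n})$ presents it as $fg$ with $f,g\in\kzerox$, the image of $xy$ under the substitution $x\mapsto f$, $y\mapsto g$.

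Next I would show that any $V$ containing some $f$ with nonzero linear part $f_1$ must equal $\kzerox$. For each $t\in k^\times$, the algebra endomorphism $\phi_t\from\kzerox\to\kzerox$ defined by $x\mapsto tx$ for every $x\in X$ satisfies $\phi_t(f)=\sum_d t^d f_d$, where $f=\sum_d f_d$ is the decomposition by total degree. Since $k$ is infinite, one can choose distinct scalars $t_1,\dots,t_N\in k^\times$ (with $N$ the top degree of $f$) so that the Vandermonde matrix $(t_i^d)$ is invertible; consequently each $f_d$, and in particular $f_1$, lies in the $k$-linear span of $\{\phi_{t_1}(f),\dots,\phi_{t_N}(f)\}\subseteq V$.

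Having extracted $f_1\in V$, I would write $f_1=\sum_{x\in X}a_x x$, pick $x_0$ with $a_{x_0}\ne 0$, and apply the endomorphism sending $x_0$ to $x_0$ and every other generator to $0$ to obtain $a_{x_0}x_0\in V$, hence $x_0\in V$. Then for any $p\in\kzerox$ the endomorphism $x_0\mapsto p$, other generators $\mapsto 0$, exhibits $p$ as an image of $x_0\in V$, giving $V=\kzerox$ and contradicting properness. The main obstacle is the Vandermonde step, which is the sole place where the infinitude of $k$ is used; the rest is combinatorial bookkeeping, and in particular the argument does not require first reducing to a maximal $T$-space or invoking Proposition~\ref{proposition: max contains t2}.
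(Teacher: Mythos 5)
Your proof is correct and follows essentially the same route as the paper's: both reduce non-containment in $Z$ to possessing a nonzero linear term, use the Vandermonde argument over the infinite field to extract that linear component as an element of $V$, and then substitute to put a generator of $\kzerox$ into $V$ (the paper phrases this with multihomogeneous components, you with total-degree components under uniform scaling, which makes no difference here). The only cosmetic imprecision is in the case $|X|=1$, where an odd-degree monomial $x^n$ is not itself a substitution instance of $x^2$; one also needs that $\set x^2\endset^T$ is an ideal, so that $x\cdot x^{n-1}\in \set x^2\endset^T$ for $n\ge 2$.
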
 

\begin{proof}
 Let $V$ be a $T$-space of $\kzerox$ that is not contained in $Z$. Then
 there exists $f\in V$ with nonzero linear term. Since $k$ is infinite,
 each multihomogeneous component of $f$ belongs to $V$, so $V$ contains
 some $x\in X$. Thus $V=\kzerox$.
\end{proof}

\section{$k$ a finite field}
 We now turn our attention to the case when $k$ is a finite field, say
 of order $q$ and characteristic $p$. Let $X$ be a nonempty set. It will
 be useful to introduce the following notion.

\begin{definition}
 Let $k$ be a finite field of order $q$. 
 Then for monomials $u_i\in \kzerox$ and $\alpha_i\in k$, $1\le i\le t$,
 $f=\sum_{i=1}^t \alpha_i u_i$ shall be said to be $q$-homogeneous if 
 for each $x\in X$ and each $i,j$ with $1\le i,j\le t$,  $\deg_x(u_i)
 \cong \deg_x(u_j)\mod{\mkern 4mu q-1}$.
\end{definition}

 The usual Vandermonde (homogeneity) argument can then be used to prove
 that if $k$ is a field of order $q$ and $V$ is a $T$-space of $\kzerox$,
 then each $q$-homogeneous component of each element of $V$ is also an
 element of $V$.

 It was proven in Corollary \ref{corollary: finite field version} that 
 $Z$ and $W_0$ are the only maximal $T$-ideals of $\kzerox$.

\begin{proposition}\label{proposition: max t-ideals are max t-spaces}
 $Z$ and $W_0$ are maximal $T$-spaces of $\kzerox$.
\end{proposition}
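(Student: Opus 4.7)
The plan is to treat $Z$ and $W_0$ separately: in each case I suppose $V$ is a $T$-space of $\kzerox$ strictly containing the given $T$-ideal and derive $V = \kzerox$.

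\emph{Case $Z$.} The $T$-ideal $Z = \set xy\endset^T$ (or $\set x^2\endset^T$ if $|X|=1$) equals the set of polynomials in $\kzerox$ of total degree at least two. Hence any $f \in V \setminus Z$ has a nonzero linear part $\ell = \sum c_i x_i$, and $f - \ell \in Z \subseteq V$ forces $\ell \in V$. Picking an index $i$ with $c_i \ne 0$ and a fixed $x \in X$, the endomorphism sending $x_i \mapsto x$ and every other generator to $0$ places $c_i x$, and hence $x$, into $V$. For any $u \in \kzerox$, the endomorphism $x \mapsto u$ (every other generator to $0$) then places $u$ into $V$, so $V = \kzerox$.

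\emph{Case $W_0$.} I work in the quotient $R = \kzerox / W_0$. Since $T^{(2)} + \set x - x^q\endset^T_X \subseteq W_0$, $R$ is commutative and every $\bar u \in R$ satisfies $\bar u^q = \bar u$, so $R$ is $k$-spanned by the commutative reduced monomials in the generators with each exponent in $\set 1,\ldots,q-1\endset$. The image $\bar V$ of $V$ in $R$ is a nonzero $T$-subspace, and my goal is $\bar V = R$. Starting from a nonzero $\bar f \in \bar V$, I apply the multi-scaling endomorphisms $y \mapsto \alpha_y y$ ($\alpha_y \in k^*$) together with the $q$-homogeneous Vandermonde argument recorded just before the proposition to extract a nonzero $q$-homogeneous summand $\bar g \in \bar V$, and then use an endomorphism that either zeroes out or collapses the variables of $X$ to a single $x$ to obtain an element of the form $c\, x^j \in \bar V$ with $c \in k^*$ and $1 \le j \le q-1$; suitable choices of the summand $\bar g$ and of the collapse make this nonzero.

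The main obstacle is the concluding step: deducing from $x^j \in \bar V$ that $\bar V = R$. When $j = 1$ this is immediate, since $x \mapsto u$ places any $u \in \kzerox$ into $V$. For $j > 1$, I plan to substitute $x \mapsto p(x)$ for cleverly chosen $p \in \kzerox$, placing $p(x)^j$ into $\bar V$, and then apply a further $q$-homogeneous extraction to recover a nonzero multiple of $x$ itself; the sample computation in characteristic $3$, $(x + x^2)^2 \cong 2x + 2x^2 \mod W_0$, illustrates why such $p$'s exist. I expect this step to require a careful case split, and it is presumably where the paper's earlier remark about separating $p = 2$ from $p > 2$ applies: in the case $q = 2$ only $j = 1$ is in range so the obstacle disappears, while $q > 2$ calls for an explicit construction. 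Once $x \in \bar V$ is in hand, the $j = 1$ argument completes the proof.
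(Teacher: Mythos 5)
Your treatment of $Z$ is correct and is essentially the paper's own argument. The problem is with $W_0$: you reduce, much as the paper does, to having some nonzero $x^j\in V$ with $1\le j\le q-1$, you correctly identify the passage from $x^j\in V$ to $x\in V$ as the crux, and then you leave that step as a plan rather than a proof (``I plan to substitute $x\mapsto p(x)$\dots I expect this step to require a careful case split''). That step is where essentially all of the content of the paper's proof lives, and it is not a routine computation. The paper handles it as follows: let $t$ be minimal with $x^t\in V$. If $p\mid t$, write $t=lp^s$ with $(l,p)=1$; the substitution $x\mapsto x^{p^{m-s}}$ (where $q=p^m$) puts $(x^q)^l$, hence $x^l$, into $V$ with $l<t$, contradicting minimality; so $(t,p)=1$. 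Then $(x+x^2)^t=\sum_{i=0}^t\binom{t}{i}x^{t+i}\in V$, the coefficient of $x^{t+1}$ is $t\ne0$ in $k$, and since $t+i<t+1+(q-1)$ for all $0\le i\le t<q$, no other exponent occurring is congruent to $t+1$ modulo $q-1$; the $q$-homogeneity (Vandermonde) argument then yields $x^{t+1}\in V$. Iterating this from $t$ up to the next multiple $(s+1)p$ of $p$ and then applying $x\mapsto x^{p^{m-1}}$ gives $x^{s+1}\in V$, and minimality forces $sp<t\le s+1$, whence $s=0$ and $x\in V$. Nothing in your proposal supplies this argument (your characteristic-$3$ example only treats $j=q-1=2$), and your guess that this is where the $p=2$ versus $p>2$ dichotomy enters is off: the paper's proof of this proposition is uniform in $p$; that dichotomy only appears later, in producing infinitely many maximal $T$-spaces.

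A secondary gap: your claim that ``suitable choices of the summand $\bar g$ and of the collapse'' produce a nonzero $c\,x^j$ is asserted, not proved. Identifying $\kzerox/W_0$ (for finite $X$) with the algebra of functions $k^X\to k$ vanishing at the origin, a nonzero element can vanish under every substitution sending each generator to $x$ or to $0$ (for instance, with $q=3$ and two variables, the function supported at the single point $(1,2)$), so one must actually argue that $V-W_0$ contains an element with a nonzero one-variable collapse. The paper sidesteps this entirely by choosing $f\in V-W_0$ with the fewest distinct monomials and using the Vandermonde argument to show that such an $f$ must be a single monomial, whose collapse to one variable is automatically a power of $x$, and no power of $x$ lies in $W_0$.
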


\begin{proof}
 First, suppose that $V$ is a $T$-space of $\kzerox$ with $Z\subsetneq
 V$, and let $f\in V-Z$. Since $Z\subset V$, we may
 assume that $f$ is linear, say $f=\sum_i \alpha_ix_i$ for some
 $x_i\in X$ and $\alpha_i\in k^*=k-\set 0\endset$. Let $x\in X$ be one of
 the variables that appears in $f$, and let $\sigma\from \kzerox\to\kzerox$
 be the $k$-algebra map determined by sending $x\mapsto x$ and $y\mapsto 0$
 for all $y\in X-\set x\endset$. Then $\sigma(f)$ is a nonzero scalar multiple
 of $x$ and thus $x\in V$, establishing that $V=\kzerox$. This proves that
 $Z$ is a maximal proper $T$-space of $\kzerox$.
 
 Now suppose that $V$ is a $T$-space of $\kzerox$ with $W_0\subsetneq V$,
 and let $f\in V-W_0$. We may assume that $f$ is
 essential, depending on the variables $x_1,x_2,\ldots,x_n\in X$. Since
 $T^{(2)}\subseteq W_0$, we may further assume that $f$ is a linear
 combination of monomials, each of the form $x_1^{i_1}x_2^{i_2}\cdots
 x_n^{i_n}$. Additionally, since $x^q-x\in W_0$,  for any $x\in X$, we
 may assume that each exponent $i_j<q$. Now, of all such elements of
 $V-W_0$, let us suppose that $f$ is such that the number of different
 monomials is least. We claim that $f$ is (a scalar multiple of) a 
 monomial. For suppose not. Then for some index $i$, there are two
 monomial summands of $f$ in which the degree of $x_{i}$ is different.
 Again, since $T^{(2)}\subseteq W_0$, we may assume that $i=n$. For each $j$
 such that there is a monomial in which the degree of $x_n$ is $j$, let
 $g_j$ denote the sum of all such monomials (with their coefficients)
 with $x_n^j$ factored out, otherwise let $g_j=0$. Then
 $f=\sum_{i=1}^{r} g_ix_n^i$, where $r<q$ is the degree of $x_n$ in $f$.
 We may apply the Vandermonde argument (see for example the proof of
 Proposition 4.2.3 of \cite{Dr}) to conclude that for each $i$
 with $g_i\ne 0$, $g_ix_n^{i}\in V$. Since there are at least two
 distinct values of $i$ with $g_i\ne 0$, we have a contradiction to the
 choice of $f$. Thus there exists a monomial $x_1^{i_1}\cdots
 x_n^{i_n}\in V$, and so there exists $t$ such that for $x=x_1$,
 $x^t\in V$. Again, since $x^q-x\in W_0$, we may assume that $t<q$. If
 $p$ divides $t$, say $t=lp^s$ with $(l,p)=1$, then the substitution
 $x\mapsto x^{p^{m-s}}$, where $q=p^m$, establishes that $(x^q)^l\in V$ 
 and so $x^l\in V$, and we note that $l<t$, so in such a case, $t$ is
 not minimal with respect to $x^t\in V$. On the other hand, suppose 
 that $(t,p)=1$. Then $(x+x^2)^t=\sum_{i=0}^t \choice t,i
 x^{t+i}\in V$, and 
 the coefficient of $x^{t+1}$ is $\choice t,1=t\ne 0$.
 Note that for $0\le i\le t<q$, $t+i<t+q=t+1+(q-1)$, so no other power
 of $x$ that appears in the expansion of $(x+x^2)^t$ has exponent
 congruent to $t+1\mod{q-1}$. Thus we may apply the Vandermonde argument
 to conclude that $x^{t+1}\in V$. Suppose now that $t$ is minimal with
 respect to $x^t\in V$. Then by our earlier observation,  $(t,p)=1$, and
 so there exists $s\ge0$ with $sp<t<(s+1)p$. We may repeatedly apply the 
 above observation to conclude that $x^{(s+1)p}\in V$. But then the
 substitution $x\mapsto x^{p^{m-1}}$ establishes  that $x^{s+1}\in V$.
 By the minimality of $t$, we then have $sp<t\le s+1$, and thus $s=0$,
 which yields $x\in V$. Thus $V=\kzerox$.
\end{proof}

 Unlike the situation for an infinite field, when $k$ is finite, 
 not every maximal $T$-space of $\kzerox$ is a maximal $T$-ideal
 of $\kzerox$,
 as we shall soon see. 

 We shall denote the free commutative associative algebra on $X$ by
 $\ckzerox$. Note that $\ckzerox\iso \kzerox/T^{(2)}$.

\begin{proposition}\label{proposition: lattice iso}
 The map $u\in \kzerox$ to $u+T^{(2)}\in \kzerox/T^{(2)}\iso \ckzerox$
 induces a lattice isomorphism between the lattice of $T$-spaces of
 $\kzerox$ that contain $T^{(2)}$ and the lattice of $T$-spaces of
 $\ckzerox$. 
\end{proposition}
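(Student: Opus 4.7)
The plan is to invoke a standard correspondence-theorem argument by defining $\Phi(V) = \pi(V)$ for a $T$-space $V$ of $\kzerox$ containing $T^{(2)}$ and $\Psi(W) = \pi^{-1}(W)$ for a $T$-space $W$ of $\ckzerox$, where $\pi\from \kzerox \to \ckzerox$ denotes the natural projection whose kernel is $T^{(2)}$. First I would verify that $\Phi$ and $\Psi$ are mutually inverse at the level of linear subspaces: since $T^{(2)} \subseteq V$, we have $\pi^{-1}(\pi(V)) = V + T^{(2)} = V$, while surjectivity of $\pi$ gives $\pi(\pi^{-1}(W)) = W$. Both $\Phi$ and $\Psi$ are plainly inclusion preserving, so once the $T$-space property is transferred correctly in each direction, the lattice isomorphism follows automatically.

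The substantive content is checking that the $T$-space property is preserved in both directions. In the downward direction, any $k$-algebra endomorphism $f$ of $\kzerox$ sends $T^{(2)}$ into itself (since $T^{(2)}$ is a $T$-ideal, hence in particular a $T$-space), so $f$ descends to a well-defined $k$-algebra endomorphism $\bar f$ of $\ckzerox$ with $\pi\comp f = \bar f\comp \pi$. Thus for $u \in \Psi(W)$ we have $\pi(f(u)) = \bar f(\pi(u)) \in W$, giving $f(u) \in \Psi(W)$ and so $\Psi(W)$ is a $T$-space of $\kzerox$ containing $T^{(2)} = \Psi(\set 0\endset)$.

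The upward direction is the crux of the argument: I need to show that every $k$-algebra endomorphism $\bar g$ of $\ckzerox$ lifts to a $k$-algebra endomorphism $g$ of $\kzerox$ satisfying $\pi\comp g = \bar g\comp \pi$. This is where the freeness of $\kzerox$ on $X$ is essential. For each $x \in X$ I would choose any preimage $g(x) \in \pi^{-1}(\bar g(x))$; by the universal property of the free associative algebra this assignment extends uniquely to a $k$-algebra endomorphism $g$ of $\kzerox$. Then $\pi\comp g$ and $\bar g\comp \pi$ agree on the generating set $X$, hence on all of $\kzerox$. Consequently $\bar g(\Phi(V)) = \bar g(\pi(V)) = \pi(g(V)) \subseteq \pi(V) = \Phi(V)$, so $\Phi(V)$ is indeed a $T$-space of $\ckzerox$.

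I do not expect any real obstacle. The only delicate point is the lifting step in the third paragraph; this rests simultaneously on the freeness of $\kzerox$ (to extend an arbitrary set map $X \to \kzerox$ to an algebra endomorphism) and on the fact that $T^{(2)}$ is endomorphism invariant (so that quotient endomorphisms exist and the diagram commutes). The rest is routine bookkeeping.
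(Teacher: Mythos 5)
Your proposal is correct and follows essentially the same route as the paper: both arguments rest on the descent of endomorphisms of $\kzerox$ to $\ckzerox$ (since $T^{(2)}$ is a $T$-ideal) and the lifting of endomorphisms of $\ckzerox$ back to $\kzerox$ (via freeness on $X$), together with the observation that $\pi^{-1}(\pi(V))=V+T^{(2)}=V$ when $T^{(2)}\subseteq V$. Your writeup spells out the mutual-inverse and order-preservation checks a bit more explicitly, but the content is identical.
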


\begin{proof}
 Since $T^{(2)}$ is a $T$-ideal of $\kzerox$, for every algebra
 endomorphism $f$ of $\kzerox$, there exists a unique algebra
 endomorphism $\overline{f}$ of $\ckzerox$ with $\pi_2\comp
 f=\overline{f}\comp \pi_2$, where
 $\pi_2\from\kzerox\to\kzerox/T^{(2)}\iso \ckzerox$ is given by
 $\pi_2(u)=u+T^{(2)}$. Conversely, since $\kzerox$ is the free
 associative algebra on the set of generators $X$, it follows that for
 every algebra homomorphism $\overline{f}\from\ckzerox\to\ckzerox$, 
 there exists an algebra homomorphism $f\from \kzerox\to\kzerox$ with
 $\pi_2\comp f=\overline{f}\comp\pi_2$. Thus if $U$ is a $T$-space of
 $\ckzerox$, then $\pi_2^{-1}(U)$ is a $T$-space of $\kzerox$ that
 contains $T^{(2)}$. As well, if $U\subseteq \kzerox$ is a $T$-space of
 $\kzerox$, then $\pi_2(U)$ is a $T$-space of $\ckzerox$, and
 $\pi_2^{-1}(\pi_2(U))=U+T^{(2)}$, so if $T^{(2)}\subseteq U$,
 $\pi_2^{-1}(\pi_2(U))=U$. This establishes the map given by $u\mapsto
 u+T^{(2)}$ determines a bijective mapping between the set of all
 $T$-spaces of $\kzerox$ that contain $T^{(2)}$ and the set of all
 $T$-spaces of $\ckzerox$, and the lattice properties of this mapping
 follow immediately. 
\end{proof}

\begin{corollary}\label{corollary: bijective mapping on maximal t-spaces}
 The maximal $T$-spaces of $\kzerox$ are in bijective correspondence
 with the maximal $T$-spaces of $\ckzerox$.
\end{corollary}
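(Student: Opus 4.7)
The plan is to string together the two immediately preceding results. By Proposition \ref{proposition: max contains t2}, every maximal $T$-space of $\kzerox$ already contains $T^{(2)}$, so the set of maximal $T$-spaces of $\kzerox$ is exactly the set of maximal elements of the sublattice of $L(\kzerox)$ consisting of those $T$-spaces that contain $T^{(2)}$. Proposition \ref{proposition: lattice iso} identifies this sublattice, via $u\mapsto u+T^{(2)}$, with the full lattice of $T$-spaces of $\ckzerox$.

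More explicitly, I would first observe that the map $\varphi\from V\mapsto \pi_2(V) = V/T^{(2)}$ provided by Proposition \ref{proposition: lattice iso} sends the top element $\kzerox$ of the sublattice to the top element $\ckzerox$, so it restricts to a bijection between the proper $T$-spaces of $\kzerox$ containing $T^{(2)}$ and the proper $T$-spaces of $\ckzerox$. Since $\varphi$ is a lattice isomorphism, it preserves the inclusion ordering in both directions, and hence sends maximal (proper) elements to maximal (proper) elements. Combined with Proposition \ref{proposition: max contains t2}, this yields the asserted bijection: $V\mapsto V/T^{(2)}$ is the desired correspondence, with inverse $U\mapsto \pi_2^{-1}(U)$.

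There is no real obstacle here; the work has already been done in Propositions \ref{proposition: max contains t2} and \ref{proposition: lattice iso}, and the corollary is a one-line combination of them. The only thing worth stating carefully is that a lattice isomorphism between two lattices with top elements carries maximal proper elements to maximal proper elements, which is immediate from the order-preserving property in both directions.
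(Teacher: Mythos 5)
Your proposal is correct and follows exactly the same route as the paper: combine Proposition \ref{proposition: max contains t2} (every maximal $T$-space of $\kzerox$ contains $T^{(2)}$) with the lattice isomorphism of Proposition \ref{proposition: lattice iso}. The extra remark that a lattice isomorphism preserving the top element carries maximal proper elements to maximal proper elements is the same (implicit) observation the paper relies on.
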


\begin{proof}
 By Proposition \ref{proposition: lattice iso}, the lattice of $T$-spaces of $\kzerox$ that
 contain $T^{(2)}$ is isomorrphic to the lattice of $T$-spaces of
 $k[X]_0$, and by Proposition \ref{proposition: max contains t2}, every maximal $T$-space
 of $\kzerox$ contains $T^{(2)}$.
\end{proof}
 
 Thus the study of the maximal $T$-spaces of $\kzerox$ can be reduced
 (if one can think of this as a reduction) to the study of the maximal
 $T$-spaces of $\ckzerox$.

 Recall that for any $k$-algebra $A$, $L(A)$ denotes the lattice of all
 $T$-spaces of $A$. We shall let $M(A)$ denote the set of maximal
 $T$-spaces of $A$. Note that by Lemma \ref{lemma: free
 have maximal}, if $A$ is a free associative (commutative or otherwise)
 $k$-algebra, then  $M(A)$ is not empty.

 Let $x\in X$, and let $\pi\from \kzerox\to \onekzerox=xk[x]$, the free
 associative algebra on the generator $x$, denote the algebra
 homomorphism determined by mapping each $z\in X$ to $x$. Then for
 each $T$-space $U$ of $\kzerox$, $\pi(U)$ is a $T$-space of
 $\onekzerox\subseteq \kzerox$, and  $\pi(U)\subseteq U$. We note that 
 $\pi\from \kzerox\to \onekzerox$ induces a poset map from $L(\kzerox)$
 to $L(\onekzerox)$ (which we shall also denote by $\pi$). Now, there
 is a natural poset map $\omega\from L(\onekzerox)\to L(\kzerox)$ given 
 by $\omega(V)=V^S$, where $V$ is a $T$-space of $\onekzerox$ and $V^S$
 is the $T$-space of $\kzerox$ that is generated by  $V\subseteq
 \onekzerox\subseteq\kzerox$. Evidently, $\omega(\pi(V))\subseteq V$ for
 every $V\in L(\kzerox)$, while  $\pi(\omega(V))=V$ for every $V\in
 L(\onekzerox)$. In particular, we note that $\pi$ is surjective.

 \begin{lemma}\label{lemma: one variable guys}
 Let $V$ be a $T$-space of $\onekzerox$. Then the subset of $L(\kzerox)$
 that consists of all $T$-spaces $Y$ of $\kzerox$ for which $\pi(Y)=V$
 is an interval with minimum element $\omega(V)$.
\end{lemma}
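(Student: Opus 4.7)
The plan is to verify that the set $S = \{Y \in L(\kzerox) : \pi(Y) = V\}$ is the interval $[\omega(V), Y^*]$ for a suitable maximum element $Y^*$. The two relations $\pi(\omega(V)) = V$ and $\omega(\pi(Y)) \subseteq Y$, both recorded in the paragraph preceding the lemma, do most of the work. The first shows $\omega(V) \in S$. The second, applied to any $Y \in S$, gives $\omega(V) = \omega(\pi(Y)) \subseteq Y$. Hence $\omega(V)$ is the minimum element of $S$.

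To produce a maximum, I would show that $S$ is closed under arbitrary joins in $L(\kzerox)$. The join of a family $\{Y_\alpha\}$ of $T$-spaces is just the subspace sum $\sum_\alpha Y_\alpha$ (already $T$-invariant), and since $\pi\from \kzerox \to \onekzerox$ is $k$-linear, one has $\pi(\sum_\alpha Y_\alpha) = \sum_\alpha \pi(Y_\alpha)$. If every $Y_\alpha$ belongs to $S$, this sum equals $V$, and so $Y^* = \sum_{Y \in S} Y$ lies in $S$ and is its greatest element.

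Finally, to confirm that $S$ is the whole interval $[\omega(V), Y^*]$, suppose $\omega(V) \subseteq Y \subseteq Y^*$ for some $T$-space $Y$ of $\kzerox$. Monotonicity of $\pi$ gives $V = \pi(\omega(V)) \subseteq \pi(Y) \subseteq \pi(Y^*) = V$, so $\pi(Y) = V$ and $Y \in S$. The argument is essentially formal and I do not anticipate a genuine obstacle; everything reduces to the two properties of $\omega$ and $\pi$ already in hand together with the $k$-linearity needed to push sums through $\pi$.
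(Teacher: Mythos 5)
Your proposal is correct and follows essentially the same route as the paper: the minimum is identified via the identities $\pi(\omega(V))=V$ and $\omega(\pi(Y))\subseteq Y$, the maximum is obtained as the sum of all members of the set using linearity of $\pi$, and the interval property follows by monotonicity. You omit the paper's verification that the set is also closed under intersection and instead spell out the final "everything between min and max lies in the set" step, but these are cosmetic differences.
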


\begin{proof}
 First, we prove that the set is a sublattice of $L(\kzerox)$. Let
 $U,W\in L(\kzerox)$ with $\pi(U)=\pi(W)=V$. Then $V\subseteq U$ and
 $V\subseteq W$, so $V\subseteq U\cap W$. Thus $V\subseteq \pi(U\cap
 W)\subseteq \pi(U)=V$ and so $\pi(U\cap W)=V$. As well, $\pi(U+W)=
 \pi(U)+\pi(W)=V+V=V$. Thus the set is a sublattice of $L(\kzerox)$.
 Moreover, since $V\subseteq U$, it follows that $\omega(V)=
 V^S\subseteq U$. As $V=\pi(\omega(V))$, we see that $\omega(V)$ is the
 minimum element of the sublattice. Finally, since the sum of all
 $T$-spaces in the set is again a $T$-space in the set, it follows that
 the set has a maximum element, and so is an interval.
\end{proof} 

\begin{lemma}\label{lemma: max goes to max}
 If $U\in M(\kzerox)$, then $\pi(U)\in M(\onekzerox)$.
\end{lemma}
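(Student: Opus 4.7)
The plan is to verify two things: (i) that $\pi(U)$ is a proper $T$-space of $\onekzerox$, and (ii) that any $T$-space $W$ of $\onekzerox$ strictly containing $\pi(U)$ must coincide with $\onekzerox$.

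For (i), I would exploit the fact that via the inclusion $\onekzerox\subseteq\kzerox$, the map $\pi$ can be viewed as an algebra endomorphism of $\kzerox$, and hence $\pi(U)\subseteq U$ because $U$ is a $T$-space. If $\pi(U)=\onekzerox$, then in particular $x\in U$, and then the endomorphisms of $\kzerox$ determined by $x\mapsto y$ for each $y\in X$ would put all of $X$ into $U$, forcing $U=\kzerox$ and contradicting the properness of the maximal $T$-space $U$.

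For (ii), suppose $W\in L(\onekzerox)$ with $\pi(U)\subsetneq W$, and consider the $T$-space $U+\omega(W)$ of $\kzerox$. It contains $U$, so by maximality of $U$ we have either $\omega(W)\subseteq U$ or $U+\omega(W)=\kzerox$. In the first case, applying $\pi$ and using the identity $\pi(\omega(W))=W$ (recorded just before Lemma \ref{lemma: one variable guys}) yields $W\subseteq\pi(U)$, contradicting the strict inclusion. In the second case, applying the $k$-linear map $\pi$ gives
\[
\pi(U)+W=\pi(U)+\pi(\omega(W))=\pi(U+\omega(W))=\pi(\kzerox)=\onekzerox,
\]
and since $\pi(U)\subseteq W$, this forces $W=\onekzerox$, as desired.

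I do not foresee any serious obstacle: the whole argument rests on the two identities $\pi(U)\subseteq U$ and $\pi(\omega(W))=W$, both already recorded in the excerpt, together with the trivial fact that the linear map $\pi$ satisfies $\pi(U+\omega(W))=\pi(U)+\pi(\omega(W))$. The only thing to keep straight is whether one is summing inside $L(\kzerox)$ or $L(\onekzerox)$, but both operations are just vector-space sums of $T$-invariant subspaces, over which $\pi$ distributes.
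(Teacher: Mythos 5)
Your proof is correct and follows essentially the same route as the paper's: both arguments hinge on applying the maximality of $U$ to the $T$-space $U+\omega(W)$ and on the identities $\pi(U)\subseteq U$ and $\pi(\omega(W))=W$. The only difference is cosmetic --- the paper first invokes Lemma \ref{lemma: free have maximal} to pick $W$ maximal above $\pi(U)$ and then shows $W=\pi(U)$, whereas you take $W$ to be an arbitrary $T$-space strictly above $\pi(U)$ and conclude directly that $W=\onekzerox$, which spares the appeal to Zorn's lemma.
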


\begin{proof}
 Let $U\in M(\kzerox)$. Since $\pi(U)\subseteq U$, it follows that
 $\pi(U)$ is a proper $T$-space of $\onekzerox$, and thus by
 Lemma \ref{lemma: free have maximal}, there exists
 $W\in M(\onekzerox)$ with $\pi(U)\subseteq W$. Consider
 $\pi(U+\omega(W))=\pi(U)+\pi(\omega(W))=\pi(U)+W \subseteq W$, so
 $U+\omega(W)\ne \kzerox$. Since $U$ was maximal in $\kzerox$, we
 conclude that $U+\omega(W)=U$, so $\omega(W)\subseteq U$. But then
 $W=\pi(\omega(W))
 \subseteq\pi(U)\subseteq W$ and so $W=\pi(U)$, as required.
\end{proof}
 
\begin{proposition}  
 The map $\pi\from L(\kzerox)\to L(\onekzerox)$ induces a bijection from
 $M(\kzerox)$ onto $M(\onekzerox)$, and so every  maximal $T$-space of
 $\kzerox$ is uniquely determined by its one-variable polynomials.
\end{proposition}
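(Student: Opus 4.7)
The plan is to build on Lemma \ref{lemma: max goes to max}, which already shows that $\pi$ sends $M(\kzerox)$ into $M(\onekzerox)$, and to verify that this restricted map is both injective and surjective. The concluding clause about one-variable polynomials will then drop out of the identification $\pi(U)=U\cap\onekzerox$ for any $T$-space $U$ of $\kzerox$.

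For injectivity, I would suppose $U,U'\in M(\kzerox)$ satisfy $\pi(U)=\pi(U')$, form the sum $U+U'\in L(\kzerox)$, and apply $\pi$. Since $\pi$ respects joins (both sides equal the linear span of the images of $U$ and $U'$), we have $\pi(U+U')=\pi(U)+\pi(U')=\pi(U)$, which is proper in $\onekzerox$ by Lemma \ref{lemma: max goes to max}. Hence $U+U'\ne\kzerox$, so maximality of $U$ forces $U+U'=U$, giving $U'\subseteq U$, and then maximality of $U'$ gives $U=U'$.

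For surjectivity, given $W\in M(\onekzerox)$ I would consider $\omega(W)\in L(\kzerox)$. Since $\pi(\omega(W))=W\ne\onekzerox$, $\omega(W)$ is a proper $T$-space of $\kzerox$, so by Lemma \ref{lemma: free have maximal} it is contained in some $U\in M(\kzerox)$. Then $W=\pi(\omega(W))\subseteq\pi(U)\in M(\onekzerox)$, and since $\pi(U)$ is proper, $\pi(U)=W$. For the last clause, the algebra homomorphism $\pi$ is the identity on $\onekzerox$ and has image contained in $\onekzerox$; combined with the closure of any $T$-space $U$ of $\kzerox$ under the endomorphism $\pi$, this yields $\pi(U)=U\cap\onekzerox$. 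Injectivity of the restricted map then says each maximal $T$-space of $\kzerox$ is determined by its one-variable polynomials.

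All the genuine work has been absorbed into the preceding lemmas; the main conceptual point to get right is that $\pi$ and $\omega$ together let one bounce between $M(\kzerox)$ and $M(\onekzerox)$ without loss, so the only potential pitfall is in the surjectivity step, where one must confirm that $\omega(W)$ is proper before invoking Lemma \ref{lemma: free have maximal}. That check is immediate from $\pi(\omega(W))=W$, so no routine calculation remains.
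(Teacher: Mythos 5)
Your proof is correct, but both halves deviate from the paper's argument in ways worth noting. For injectivity, the paper proves the stronger statement that a maximal $T$-space $U$ is the \emph{only} maximal $T$-space containing $\omega(\pi(U))$: it supposes $U'\ne U$ is another one, notes that $U+U'=\kzerox$ forces $x=f+g$ with essential $f\in U$ and essential $g\in U'$ (hence $f,g\in\onekzerox$), and derives $x\in U'$, a contradiction. Your version instead observes that $\pi(U+U')=\pi(U)+\pi(U')=\pi(U)$ is proper, so $U+U'$ is proper and maximality collapses everything to $U=U'$; this is shorter and avoids the essential-decomposition step entirely, though it yields only injectivity rather than the sharper uniqueness statement the paper records in passing. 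For surjectivity, the paper invokes Lemma \ref{lemma: one variable guys} to select the \emph{maximum} $T$-space $U$ with $\pi(U)=V$ and then argues that this $U$ must be maximal; you instead take any maximal $T$-space $U$ containing the \emph{minimum} element $\omega(W)$ of that fibre (which exists by Lemma \ref{lemma: free have maximal}, since $\omega(W)$ is proper because $\pi(\omega(W))=W\ne\onekzerox$) and squeeze $W\subseteq\pi(U)\subsetneq\onekzerox$ to conclude $\pi(U)=W$ from maximality of $W$. Your route makes Lemma \ref{lemma: one variable guys} unnecessary for this proposition, at the cost of leaning on Lemma \ref{lemma: max goes to max} in both halves. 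Finally, your identification $\pi(U)=U\cap\onekzerox$ (valid because $\pi$ fixes $\onekzerox$ pointwise and $U$, being a $T$-space, is $\pi$-invariant) correctly substantiates the ``one-variable polynomials'' clause, which the paper leaves implicit.
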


\begin{proof}
 Let $U$ be a maximal $T$-space of $\kzerox$, and let
 $V=\omega(\pi(U))$, so $V\subseteq U$. Let $U'$ denote a  maximal
 $T$-space of $\kzerox$ containing $V$, and suppose that $U'\ne U$. Then
 $x\in U+U'$, say $x=f+g$ for  some essential $f\in U$ and essential
 $g\in U'$; that is, $f,g\in \onekzerox$. But then $f\in \pi(U)\subseteq
 V\subseteq U'$ and so $x=f+g\in U'$. However,
 this implies that $U'=\kzerox$, which is not the case.  Thus $U$ is the
 only maximal $T$-space of $\kzerox$ that contains $\pi(U)$. This
 establishes that the restriction of $\pi$ to $M(\kzerox)$ is injective. By
 Lemma \ref{lemma: max goes to max}, $\pi(U)\in M(\onekzerox)$ if $U\in
 M(\kzerox)$, so $\pi$ induces an injective  function from $M(\kzerox)$
 into $M(\onekzerox)$. It remains to prove that $\pi\from M(\kzerox)\to
 M(\onekzerox)$ is surjective. Let $V\in M(\onekzerox)$. By
 Lemma \ref{lemma: one variable guys}, there is a  $T$-space $U$ of $\kzerox$
 that is maximum with respect to the property $\pi(U)=V$. We claim that
 $U\in M(\kzerox)$. For if not, then there exists $W\in M(\kzerox)$ with
 $U\subsetneq W$, and thus $V=\pi(U)\subsetneq \pi(W)$. Since $V$ was
 maximal in $L(\onekzerox)$, it follows that $\pi(W)=\onekzerox$ and so
 $x\in \pi(W)\subseteq W$. But then $W=\kzerox$, which contradicts our
 choice of $W$. Thus $\pi\from M(\kzerox)\to M(\onekzerox)$ is
 surjective.
\end{proof}

 As a result of this observation, we shall focus in the next two
 sections on the study of the maximal $T$-spaces of $\onekzerox$.
 But first, we wish to briefly discuss some questions that remain 
 unanswered at the time of writing. 

 For a given maximal $T$-space $U$ of $\kzerox$, it is not clear how the
 $T$-space generated by $T^{(2)}$ and the  one-variable polynomials in
 $U$ compares to $U$. In general, they will not be equal. For example,
 $\pi(Z)$ is equal to $x^2\,\onekonex$. If $k$ is a finite field of characteristic
 2, we claim that $xy\notin T^{(2)}+\omega(\pi(Z))$. Suppose to the contrary that $xy\in
 T^{(2)}+\omega(\pi(Z))$. Then $xy=\sum_{j} \alpha_ju_j^{i_j}+v$ for some
 $\alpha_j\in k$, $u_j\in \kzerox$, and $v\in T^{(2)}$, where for each $j$, $i_j\ge2$. We
 may assume that $v$ and each $u_j$ depend only on $x$ and $y$. For
 each $j$, if $i_j>2$, then each monomial of $u_j^{i_j}$ has degree at
 least 3. Furthermore, even if $i_j=2$, $xy$ can only appear in $u_j^2$ if $u_j$ has
 linear term $\beta_j x+\gamma_j y$ with $\beta_j,\gamma_j\ne0$. However, for any such $u_j$, 
 $u_j=\beta_j x+\gamma_j y+u_j'$, where each monomial in $u_j'$ has degree at least 2, and in such a case
 (since $k$ has characteristic 2),
 $u_j^2=\beta_j^2x^2+\gamma_j^2y^2+(u_j')^2+\beta_j\gamma_j[x,y]+\beta_j[x,u_j']+\gamma_j[y,u_j']$ with all monomials of
 $(u_j')^2$, $[x,u_j']$, and $[y,u_j']$ having degree at least 3. Let $S$ denote the set of all
 indices $j$ for which $i_j=2$ and $u_j$ has linear term containing both $x$ and $y$. Then

 \begin{align*}
   xy&=\sum_j \alpha_ju_j^{i_j}+v\\
   &=(\sum_{j\in S}\alpha_j\beta_j^2) x^2+(\sum_{j\in S}\alpha_j\gamma_j^2) y^2
   + (\sum_{j\in S}\alpha_j\beta_j\gamma_j) [x,y]\\
   &\hskip40pt +\sum_{j\in S} \alpha_j((u_j')^2+\beta_j[x,u_j']+\gamma_j[y,u_j'])
  +\sum_{j\notin S}\alpha_j u_j^{i_j}+v.\\
 \end{align*}
 As neither $x^2$ nor $y^2$ is a monomial appearing in an element of
 $T^{(2)}$, and each monomial of 
 $\sum_{j\in S} \alpha_j((u_j')^2-\beta_j[x,u_j']-\gamma_j[y,u_j'])
  +\sum_{j\notin S}\alpha_j u_j^{i_j}$ has degree at least 3, it follows that
  $(\sum_{j\in S}\alpha_j\beta_j^2) x^2+(\sum_{j\in S}\alpha_j\gamma_j^2) y^2=0$.
 Thus 
 $$
 xy= (\sum_{j\in S}\alpha_j\beta_j\gamma_j) [x,y] +\sum_{j\in S} \alpha_j((u_j')^2-\beta_j[x,u_j']-\gamma_j[y,u_j'])
  +\sum_{j\notin S}\alpha_j u_j^{i_j}+v.
 $$
 Furthermore, as $xy$ can only appear as a summand in $v$ as a term in $[x,y]$, it follows by the same
 degree considerations that
 $xy=\gamma [x,y]$ for some $\gamma\in k$. As this is not possible, we conclude that $xy\notin
 T^{(2)}+(\pi(Z))^S=\omega(\pi(Z))$, and so
 $T^{(2)}+\omega(\pi(Z))\subsetneq Z$ when $k$ is any finite field of characteristic 2.

 On the other hand, since $2xy=(x+y)^2-x^2-y^2-[y,x]$, 
 $xy\in \omega(\pi(Z))$ when $k$ is a finite field of characteristic $p>2$.

 Furthermore, for any $T$-space $V$ of $\onekzerox$, we might ask how
 the maximum $T$-space $M_V$ in $\kzerox$ that has  image $V$ compares
 to $(\pi^{-1}(V))^S$. In general, we expect $\pi^{-1}(V))^S$ to be larger
 than $M_V$; equivalently, $\pi(\pi^{-1}(V)^S)$ is
 larger than $V$. For example, in $\gftwoz$, consider the $T$-space $V$
 that is generated by $x+x^2$. Then  $x+xy\in \pi^{-1}(V)$, and so $x\in
 \pi^{-1}(V)^S\subseteq\gftwoxz$, which means that 
 $\pi^{-1}(V)^S=\gftwoxz$. However, $V\subseteq \set
 x+x^2\endset^T\subseteq \gftwoz$, and $\set x+x^2\endset^T$ is a
 maximal $T$-ideal of $\gftwoz$.

 \section{A study of maximal $T$-spaces of $\onekzerox$ in the case 
   of a finite field of characteristic $p>2$}

 In this section, $p>2$ is a prime and $k$ is a finite field of
 characteristic $p$ and order $q$.

\begin{definition}\label{definition: def of vn}
 For each $n\ge0$, let $V_n=\set x+x^{q^{2^n}}\endset^S\subseteq \onekzerox$.
\end{definition}

 Since $(\alpha u+\beta v)+(\alpha u+\beta v)^{q^{2^n}}
 =\alpha(u+u^{q^{2^n}})+\beta(v+v^{q^{2^n}})$ for any
 $\alpha,\beta\in k$ and any $u,v\in \kzerox$, it follows that $\set
 x^i+x^{iq^{2^n}}\mid i\ge 1\endset$  is a $k$-linear basis for $V_n$,
 and thus for each  $n\ge0$, $V_n$ is a proper $T$-space of
 $\onekzerox$.

\begin{proposition}\label{proposition: nice for vn}
 Let $n\ge0$. Then $x-x^{q^{2^{n+m}}}\in V_n$ for each $m\ge1$.
\end{proposition}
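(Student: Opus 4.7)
The plan is to introduce the shorthand $Q = q^{2^n}$, so that $V_n$ is the $T$-space generated by $x + x^Q$ and the target membership becomes $x - x^{Q^{2^m}} \in V_n$ for all $m \ge 1$. I will proceed by induction on $m$, exploiting the fact that the doubling structure of the exponents $2^m$ matches the doubling built into the definition of $V_n$. The only tool needed is that a $T$-space is closed under every $k$-algebra endomorphism of $\onekzerox$, i.e.\ under every substitution $x \mapsto f(x)$ with $f$ essential.

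For the base case $m=1$, I apply the endomorphism determined by $x \mapsto x^Q$ to the generator $x + x^Q$; since $V_n$ is a $T$-space, the image $x^Q + x^{Q^2}$ lies in $V_n$. Subtracting it from $x + x^Q$ then produces $x - x^{Q^2} = x - x^{q^{2^{n+1}}}$, which is exactly the $m=1$ instance of the claim.

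For the inductive step, suppose $x - x^{Q^{2^m}} \in V_n$ and set $R = Q^{2^m}$. Applying the endomorphism $x \mapsto x^R$ to this element shows that $x^R - x^{R^2}$ lies in $V_n$, and then adding this to $x - x^R$ yields $x - x^{R^2} = x - x^{Q^{2^{m+1}}} = x - x^{q^{2^{n+m+1}}}$, closing the induction. I do not anticipate any real obstacle: the entire argument is essentially computation-free, with the squaring-of-the-exponent built into the generator $x + x^Q$ of $V_n$ doing all the work. It is worth noting in passing that the assumption $p > 2$ from the surrounding section is not used in this particular lemma; only subtraction in $k$ is required.
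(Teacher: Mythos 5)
Your proof is correct and follows essentially the same route as the paper's: induction on $m$, applying the substitution $x\mapsto x^{q^{2^{n+m}}}$ to the previous element and combining linearly; the shorthand $Q=q^{2^n}$ is only a notational difference. Your closing remark that $p>2$ is not used here is also accurate (the paper itself later observes that the $V_n$ behave differently in characteristic $2$ only at the stage of Corollary \ref{corollary: sum is all}).
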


\begin{proof}
 The proof is by induction on $m$. By definition, $x+x^{q^{2^n}}\in V_n$,
 and so $x^{q^{2^n}}+(x^{q^{2^n}})^{q^{2^n}}=x^{q^{2^n}}+x^{q^{2^{n+1}}}\in V_n$.
 Thus 
 $x-x^{q^{2^{n+1}}}=(x+x^{q^{2^n}})-(x^{q^{2^n}}+x^{q^{2^{n+1}}})\in V_n$, 
 and so the claim holds for $m=1$. Suppose now that $m\ge1$ is such that
 $x-x^{q^{2^{n+m}}}\in V_n$. Then $x^{q^{2^{n+m}}}- (x^{q^{2^{n+m}}})^{q^{2^{n+m}}}
 =x^{q^{2^{n+m}}}- x^{q^{2^{n+m+1}}}\in V_n$, and so $x- x^{q^{2^{n+m+1}}}=
 (x- x^{q^{2^{n+m}}})+(x^{q^{2^{n+m}}}- x^{q^{2^{n+m+1}}})\in V_n$, as required.
\end{proof}
 
\begin{corollary}\label{corollary: sum is all}
 Let $n,m\ge0$ be such that $n\ne m$. Then $V_n+V_m=\onekzerox$.
\end{corollary}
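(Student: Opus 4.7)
The plan is to use Proposition \ref{proposition: nice for vn} together with the defining generator of $V_m$ to produce $2x$ as an element of $V_n + V_m$, and then exploit the fact that $p > 2$ (so $2$ is invertible in $k$) to deduce $x \in V_n + V_m$.

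More explicitly, assume without loss of generality that $n < m$, and write $m = n + r$ with $r \geq 1$. By Proposition \ref{proposition: nice for vn} applied with this value of $r$, the element $x - x^{q^{2^{n+r}}} = x - x^{q^{2^m}}$ lies in $V_n$. On the other hand, the defining generator of $V_m$ is $x + x^{q^{2^m}} \in V_m$. Adding these two elements gives $2x \in V_n + V_m$. Since the characteristic $p$ is odd, $2$ is a unit in $k$, and hence $x \in V_n + V_m$.

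To finish, I would observe that $V_n + V_m$ is a $T$-space of $\onekzerox$ containing $x$. Because $\onekzerox$ is the free associative $k$-algebra on the single generator $x$, for every $f \in \onekzerox$ there is an algebra endomorphism sending $x \mapsto f$, so $f$ lies in any $T$-space containing $x$. Therefore $V_n + V_m = \onekzerox$, as required.

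There is no real obstacle here: the only thing to be careful about is the use of $p > 2$, which is precisely what allows us to cancel the factor of $2$ in the expression $2x$; this is exactly where the hypothesis of the section (odd characteristic) enters the argument. In characteristic $2$, this argument would collapse, which foreshadows the separate treatment of the $p=2$ case later in the paper.
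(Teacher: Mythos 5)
Your proposal is correct and follows exactly the paper's own argument: apply Proposition \ref{proposition: nice for vn} to get $x-x^{q^{2^{m}}}\in V_n$, add the generator $x+x^{q^{2^{m}}}$ of $V_m$ to obtain $2x$, and invert $2$ using $p>2$. Your closing remark about why $x\in V_n+V_m$ forces $V_n+V_m=\onekzerox$ just makes explicit a step the paper leaves implicit.
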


\begin{proof}
 It suffices to prove that for each $n\ge0$ and each $m\ge1$,
 $V_n+V_{n+m}=\onekzerox$.  By Proposition \ref{proposition: nice for vn},
 $x-x^{q^{2^{n+m}}}\in V_n$, and so $2x=(x-x^{q^{2^{n+m}}})+(x+x^{q^{2^{n+m}}})\in V_n+V_{n+m}$.
 Since $2$ is invertible in $k$, it follows that $x\in V_n+V_{n+m}$, and so
 $V_n+V_{n+m}=\onekzerox$.
\end{proof}

\begin{corollary}\label{corollary: p>2 has infinitely many}
 If $k$ is a finite field of characteristic $p>2$, then $\onekzerox$ has
 infinitely many maximal $T$-spaces.
\end{corollary}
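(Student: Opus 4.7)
The plan is to use the infinite family $\{V_n\}_{n\ge 0}$ of proper $T$-spaces already constructed, together with Corollary \ref{corollary: sum is all}, which asserts that any two distinct members sum to the full algebra $\onekzerox$. The pairwise ``comaximality'' of the $V_n$ will force any maximal $T$-spaces containing them to be pairwise distinct.

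Concretely, I would proceed as follows. First, for each $n\ge 0$, note that $V_n$ is a proper $T$-space of $\onekzerox$ (as observed immediately after Definition \ref{definition: def of vn}). By Lemma \ref{lemma: free have maximal} applied to the free associative $k$-algebra $\onekzerox$ on the singleton set $\{x\}$, there exists a maximal $T$-space $M_n$ of $\onekzerox$ with $V_n\subseteq M_n$. This yields a function $n\mapsto M_n$ from the natural numbers into $M(\onekzerox)$.

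Next, I would show this function is injective. Suppose $n\ne m$ but $M_n=M_m$. Then $V_n+V_m\subseteq M_n$. By Corollary \ref{corollary: sum is all}, however, $V_n+V_m=\onekzerox$, so $M_n=\onekzerox$, contradicting the fact that $M_n$ is a (proper) maximal $T$-space. Hence the $M_n$ are pairwise distinct, and $M(\onekzerox)$ is infinite.

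There is no real obstacle here: all the work has been done in Proposition \ref{proposition: nice for vn} and Corollary \ref{corollary: sum is all}, where the characteristic hypothesis $p>2$ was used in an essential way (to invert $2$ and conclude $x\in V_n+V_{n+m}$). The corollary is then a purely formal consequence of having infinitely many pairwise comaximal proper $T$-spaces and knowing that every proper $T$-space sits inside a maximal one.
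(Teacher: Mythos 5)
Your proposal is correct and follows exactly the paper's own argument: choose a maximal $T$-space containing each $V_n$ (which exists by Lemma \ref{lemma: free have maximal}) and use Corollary \ref{corollary: sum is all} to conclude that distinct indices give distinct maximal $T$-spaces. You merely spell out the contradiction in a bit more detail than the paper does.
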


\begin{proof}
 For each $n\ge0$, let $Y_n$ denote a maximal $T$-space of $\onekzerox$
 that contains $V_n$. By Corollary \ref{corollary: sum is all}, for 
 $n\ne m$, $Y_n\ne Y_m$.
\end{proof}

\section{A study of maximal $T$-spaces of $\onekzerox$ in the case of a finite field of characteristic 2}

 Let $k$ be a finite field of order $q$ and characteristic 2. Recall
 that $\set x+x^q\endset^T$ is a maximal $T$-ideal and a maximal
 $T$-space of $\onekzerox$.  Our objective is to establish that there
 are infinitely many maximal $T$-spaces of $\onekzerox$, and  we first
 examine the family of $T$-spaces that were used to establish that there
 were infinitely many maximal $T$-spaces of $\onekzerox$ when $k$ was a
 finite field of characteristic $p>2$.

 Recall that for $n\ge0$, $V_n=\set x+x^{q^{2^n}}\endset^S$ in $\onekzerox$.
 In the case $p=2$, we have $q=2^m$ for some positive integer $m$. It is a
 straightforward inductive argument to show that for every integer $i\ge1$,
 $x+x^{2^{im}}\in V_0=W_0$. In particular, $x+x^{2^{m2^n}}\in W_0$ for every
 $n\ge0$, and so $V_n\subseteq W_0$ for every $n\ge0$. 

 Thus we shall need to explore other families of $T$-spaces of
 $\onekzerox$ if we hope to achieve our objective of showing that
 $\onekzerox$ contains infinitely many maximal $T$-spaces.

\begin{definition}\label{definition: gen wn}
 For each positive integer $n$, let $W_n=\set x+x^q,x^{q^n+1}\endset^S$ in $\onekzerox$.
\end{definition}
 
\begin{lemma}\label{lemma: fermat property}
 Let $n\ge1$. Then for any $u,v\in\onekzerox$, 
 $(u+v)^{q^n+1}= u^{q^n+1}+v^{q^n+1}+u^{q^n}v+uv^{q^n}$.
\end{lemma}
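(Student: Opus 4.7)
The plan is to exploit two features of the setting: first, that $\onekzerox$ is commutative (it is the free associative algebra on a single generator $x$, hence isomorphic to the polynomial algebra $xk[x]$ without constant term); second, that since $k$ has characteristic $2$ and $q=2^m$ for some $m\ge1$, the exponent $q^n=2^{mn}$ is a power of $2$. On a commutative ring of characteristic $2$, the Frobenius-type map $w\mapsto w^{2^k}$ is a ring homomorphism for every $k\ge0$, so in particular $(u+v)^{q^n}=u^{q^n}+v^{q^n}$.

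The argument then takes just one line: write
\begin{align*}
 (u+v)^{q^n+1}&=(u+v)^{q^n}(u+v)=(u^{q^n}+v^{q^n})(u+v)\\
 &=u^{q^n+1}+u^{q^n}v+v^{q^n}u+v^{q^n+1},
\end{align*}
and then use commutativity of $\onekzerox$ to replace $v^{q^n}u$ by $uv^{q^n}$, yielding exactly the stated identity.

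There is essentially no obstacle here; the only thing one must take care to mention is that the Frobenius identity $(u+v)^{q^n}=u^{q^n}+v^{q^n}$ is being applied in a commutative ring (which is why one does not pick up cross terms from the noncommutative binomial expansion, as would otherwise happen in $\kzerox$ when $|X|>1$). This observation is precisely what makes the one-variable case tractable and is consistent with the reduction established in the previous section, where the study of maximal $T$-spaces of $\kzerox$ was transferred to $\onekzerox$.
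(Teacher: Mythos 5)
Your proof is correct and is essentially identical to the paper's: both factor $(u+v)^{q^n+1}$ as $(u+v)^{q^n}(u+v)$, apply the Frobenius identity $(u+v)^{q^n}=u^{q^n}+v^{q^n}$ (valid in characteristic $2$ since $q^n$ is a power of $2$ and $\onekzerox$ is commutative), and expand. Your explicit remark about why commutativity matters is a reasonable addition but does not change the argument.
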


\begin{proof}
 We have $(u+v)^{q^n+1}=(u+v)(u+v)^{q^n}=(u+v)(u^{q^n}+v^{q^n})=u^{q^n+1}+v^{q^n+1}
  +u^{q^n}v+uv^{q^n}$.
\end{proof}
  
\begin{definition}\label{definition: def of Lm}
 For each integer $n\ge1$, let $L_n(u,v)=u^{q^n}v+uv^{q^n}$ for each $u,v\in \onekzerox$.
\end{definition}

\begin{proposition}\label{proposition: bilinearity}
 Let $n\ge1$ be an integer. Then $L_n$ is a bilinear function from $\onekzerox\times
 \onekzerox$ to $\onekzerox$.
\end{proposition}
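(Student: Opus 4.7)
The plan is to reduce bilinearity of $L_n$ to the single fact that the $q^n$-th power map is a $k$-linear endomorphism of $\onekzerox$. Once that is in hand, bilinearity will follow at once from the definition $L_n(u,v) = u^{q^n}v + uv^{q^n}$ together with distributivity in $\onekzerox$.

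First I would verify that the map $\phi_n \from \onekzerox \to \onekzerox$ given by $\phi_n(a) = a^{q^n}$ is a $k$-algebra endomorphism. Since $\onekzerox$ is the (commutative) polynomial ring $xk[x]$ and $k$ has characteristic $2$, writing $q = 2^m$ we have $q^n = 2^{mn}$, so the standard Frobenius computation $(a+b)^{2^j} = a^{2^j} + b^{2^j}$ (iterated, valid in any commutative ring of characteristic $2$) shows that $\phi_n$ is additive; multiplicativity is automatic by commutativity. For $k$-linearity, I would note that every $\alpha \in k = \gf{q}$ satisfies $\alpha^q = \alpha$, hence $\alpha^{q^n} = \alpha$, so $\phi_n(\alpha a) = \alpha^{q^n} a^{q^n} = \alpha\,\phi_n(a)$.

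Next, using this, I would compute
\begin{align*}
 L_n(u_1 + u_2, v) &= (u_1+u_2)^{q^n} v + (u_1+u_2) v^{q^n} \\
 &= (u_1^{q^n} + u_2^{q^n})v + u_1 v^{q^n} + u_2 v^{q^n} \\
 &= L_n(u_1,v) + L_n(u_2,v),
\end{align*}
and, for $\alpha \in k$,
\[
 L_n(\alpha u, v) = (\alpha u)^{q^n} v + (\alpha u) v^{q^n}
 = \alpha u^{q^n} v + \alpha u v^{q^n} = \alpha L_n(u,v).
\]
The analogous computations in the second argument are identical, with the roles of $u$ and $v$ interchanged.

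There is no real obstacle here: the entire content of the proposition lies in the observation that $\phi_n$ is additive (from characteristic $2$) and fixes scalars (from $k = \gf{q}$). Everything else is distributivity. In the write-up I would simply state the two properties of $\phi_n$ and quote the display above for one variable, remarking that the argument for the other variable is symmetric.
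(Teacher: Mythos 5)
Your proposal is correct and follows essentially the same route as the paper: both arguments rest on the two facts that the $q^n$-th power map is additive (Frobenius, since $q^n$ is a power of $2$ and $\onekzerox=xk[x]$ is commutative of characteristic $2$) and that $\alpha^{q^n}=\alpha$ for $\alpha\in k=\gf{q}$, after which bilinearity is just distributivity, with symmetry handling the second argument. The only cosmetic difference is that you isolate these facts as properties of the endomorphism $a\mapsto a^{q^n}$ before computing, whereas the paper expands $(\alpha u_1+\beta u_2)^{q^n}$ directly in a single chain of equalities.
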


\begin{proof}
 By the symmetry in the definition, it suffices to prove that for every $u_1,u_2,v\in \onekzerox$ and
 $\alpha,\beta\in k$, $L_n(\alpha u_1+\beta u_2,v)=\alpha L_n(u_1,v)+\beta L_n(u_2,v)$. We have
 \begin{align*}
  L_n(\alpha u_1+&\beta u_2,v)=(\alpha u_1+\beta u_2)^{q^n}v+(\alpha u_1+\beta u_2)v^{q^n}\\
           &=(\alpha^{q^n} u_1^{q^n}+\beta^{q^n} u_2^{q^n})v+\alpha u_1v^{q^n}+\beta u_2v^{q^n}\\
           &=\alpha u_1^{q^n}v+\beta u_2^{q^n}v+\alpha u_1v^{q^n}+\beta u_2v^{q^n}\\
           &=\alpha (u_1^{q^n}v+ u_1v^{q^n})+\beta (u_2^{q^n}v+ u_2v^{q^n})\\	   
	   &=\alpha L_n(u_1,v)+\beta L_n(u_2,v).
 \end{align*}
\end{proof}

\begin{proposition}\label{proposition: gen span fermat}
 Let $n\ge1$. Then the set 
 $$
  \set x^i+x^{qi},x^{(q^n+1)i}\mid i\ge1\endset\cup\set x^{q^n i+j} +x^{i+q^n j}\mid i>j\ge1\endset
 $$ 
 is a linear spanning set for $W_n$.
\end{proposition}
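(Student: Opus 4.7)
The plan is to prove both inclusions. For the inclusion of the proposed set into $W_n$, I would apply the $k$-algebra endomorphism of $\onekzerox$ determined by $x \mapsto x^i$ to the two generators of $W_n$, which produces $x^i + x^{qi}$ and $x^{(q^n+1)i}$ and so covers the first two families. For the third, I would apply the endomorphism $x \mapsto x^i + x^j$ (with $i > j \ge 1$) to $x^{q^n+1}$ and invoke Lemma~\ref{lemma: fermat property} to write $(x^i + x^j)^{q^n+1} = x^{(q^n+1)i} + x^{(q^n+1)j} + L_n(x^i, x^j)$; since the left side and the first two right-hand summands already lie in $W_n$, so does the cross term $L_n(x^i, x^j) = x^{q^n i + j} + x^{i + q^n j}$.

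For the reverse inclusion, I would use that $W_n$ is the $k$-linear span of $\set f + f^q, f^{q^n+1} \mid f \in \onekzerox\endset$, obtained by applying arbitrary endomorphisms to the two defining generators. Fixing a generic $f = \sum_{i \ge 1} \alpha_i x^i \in \onekzerox$ and using that Frobenius is additive in characteristic $2$ together with the fixed-field identity $\alpha^q = \alpha$ for every $\alpha \in k$, I get $f^q = \sum_i \alpha_i x^{qi}$ and $f^{q^n} = \sum_i \alpha_i x^{q^n i}$. Hence $f + f^q = \sum_i \alpha_i(x^i + x^{qi})$ already lies in the span of the first family. For the other generator I would expand
\[
 f^{q^n+1} = f^{q^n}\cdot f = \sum_{i,j \ge 1} \alpha_i \alpha_j\, x^{q^n i + j}
 = \sum_{i \ge 1} \alpha_i^2\, x^{(q^n+1)i} + \sum_{i > j \ge 1} \alpha_i\alpha_j\bigl(x^{q^n i + j} + x^{i + q^n j}\bigr),
\]
obtained by separating the diagonal $i = j$ and then pairing $(i,j)$ with $(j,i)$ off the diagonal. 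This places $f^{q^n+1}$ in the span of the second and third families, completing the proof.

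The argument is mostly bookkeeping and I do not anticipate any substantive obstacle. The two points that require a moment of care are the fixed-field identity $\alpha^q = \alpha$, which is what allows all scalar coefficients to pass cleanly through the $q$-th power without leaving the span of the stated set, and the symmetrization of the double sum for $f^{q^n+1}$, which is precisely what aligns the off-diagonal terms with the shape of the third family $\set x^{q^n i + j} + x^{i + q^n j} \mid i > j \ge 1\endset$.
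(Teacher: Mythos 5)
Your proof is correct. The containment direction coincides with the paper's: the substitution $x\mapsto x^i$ handles the first two families, and the cross term $x^{q^n i+j}+x^{i+q^n j}$ is extracted from $(x^i+x^j)^{q^n+1}$ exactly as in the paper via Lemma~\ref{lemma: fermat property}. For the spanning direction, the paper reduces to showing that each $u^{q^n+1}$ lies in the span and then inducts on the number of monomial summands of $u$, peeling off one monomial at a time with Lemma~\ref{lemma: fermat property} and using the bilinearity of $L_n$ (Proposition~\ref{proposition: bilinearity}) to reduce the resulting cross term $L_n(v,x^i)$ to the basic elements $L_n(x^j,x^i)$. Your closed-form computation $f^{q^n+1}=f^{q^n}f=\sum_{i,j}\alpha_i\alpha_j x^{q^n i+j}$, with the diagonal contributing $\sum_i\alpha_i^2 x^{(q^n+1)i}$ and the off-diagonal pairs symmetrizing to $\alpha_i\alpha_j(x^{q^n i+j}+x^{i+q^n j})$, is the unrolled version of that induction: it rests on the same two facts (additivity of the $q^n$-power map in characteristic $2$, and $\alpha^{q^n}=\alpha$ for $\alpha\in\gf{q}$) but dispenses with the induction and with Proposition~\ref{proposition: bilinearity} altogether, which makes it slightly more direct and self-contained. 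The two points you single out --- the fixed-field identity and the symmetrization of the double sum --- are indeed the only delicate steps, and both are handled correctly; your identification of $W_n$ (indeed of any $T$-space $H^S$ of $\onekzerox$) with the $k$-linear span of all substitution instances of its generators is the correct starting point, the same one the paper uses implicitly.
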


\begin{proof}
 Since $W_{n}=\set x+x^q\endset^S+\set x^{q^n+1}\endset^S$ and $\set
 x^i+x^{qi}\mid i\ge1\endset$  is a spanning set for $\set
 x+x^q\endset^S$, it suffices to establish that $\set x^{q^n+1}\endset^S$  
 is spanned by   
 $$   
  S=\set x^{(q^n+1)i}\mid i\ge1\endset \cup \set x^{q^n i+j}+x^{i+q^n j}\mid i>j\ge 1\endset.  
 $$ 
 We first show that $S\subseteq \set x^{q^n+1}\endset^S$. First, we
 observe that for any positive integer $i$, $x^{i(q^n+1)}\in \set
 x^{q^n+1}\endset^S$, and for any $i>j\ge 1$, it follows from Lemma
 \ref{lemma: fermat property} with $u=x^i$ and $v=x^j$ that 
 $x^{q^n i+j} +x^{i+q^n j}\in \set x^{q^n+1}\endset^S$. Thus
 $S\subseteq \set x^{q^n+1}\endset^S$. It remains now to prove that
 $\set x^{q^n+1}\endset^S$ is spanned by $S$. It suffices to prove
 that for every $u\in \onekzerox$, $u^{q^n+1}$ is in the $k$-linear
 span of $S$. We prove this by induction on the number of monomials in $u$.  
 If $u$ is a monomial, the result is immediate.
 Suppose now that $u$ has $t>1$ monomial summands, and the result holds
 for all elements of $\onekzerox$ with fewer than $t$ monomial
 summands. Then $u=v+\alpha x^i$ for some $v\in \onekzerox$ with $t-1$
 monomial summands, and some integer $i\ge1$ and $\alpha\in k^*=k-\set
 0\endset$. By Definition \ref{definition: def of Lm} and Lemma
 \ref{lemma: fermat property}, $u^{q^n+1}=v^{q^n+1}+(\alpha x^i)^{q^n+1}
 +L_n(v,\alpha x^i)=v^{q^n+1}+\alpha^2 x^{(q^n+1)i} +L_n(v,\alpha x^i)$. 
 By the induction hypothesis, $v^{q^n+1}$ is in the linear span of $S$,
 and $x^{(q^n+1)i}\in S$, while by Proposition \ref{proposition:
 bilinearity}, $L_n(v,\alpha x^i)=\alpha L_n(v,x^i)$, so it suffices to
 prove that $L_n(v,x^i)$ is in the linear span of $S$. By Proposition
 \ref{proposition: bilinearity}, it suffices to prove that 
 $L_n(x^j,x^i)$ is in the linear span of $S$ for every $j\ge1$. In fact, 
 $L_n(x^j,x^i)=(x^j)^{q^n}x^i+x^j(x^i)^{q^n}=x^{i+q^nj}+x^{j+q^ni}\in
 S$.
\end{proof}

\begin{corollary}\label{corollary: gen proper}
 For any integer $m\ge1$, $W_{m}$ is a proper $T$-space of $\onekzerox$.
\end{corollary}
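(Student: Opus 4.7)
The plan is to prove $W_m\ne\onekzerox$ by exhibiting a polynomial not in $W_m$, taking that polynomial to be $x$ itself. I would do this by constructing a $k$-linear functional $\lambda\from\onekzerox\to k$ that vanishes on $W_m$ but satisfies $\lambda(x)\ne 0$. Since $\onekzerox$ has the monomial basis $\set x^i\mid i\ge 1\endset$, such a $\lambda$ is uniquely determined by its values $c_i:=\lambda(x^i)$. By Proposition \ref{proposition: gen span fermat}, $\lambda$ annihilates the given spanning set of $W_m$ (and hence $W_m$ itself) exactly when the sequence $(c_i)_{i\ge 1}$ satisfies $c_i=c_{qi}$ for every $i\ge 1$, $c_{(q^m+1)i}=0$ for every $i\ge 1$, and $c_{q^m i+j}=c_{i+q^m j}$ for all $i>j\ge 1$ (signs disappear because $k$ has characteristic $2$).

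The specific choice I would make is $c_i:=1$ if $(q^m+1)\nmid i$ and $c_i:=0$ otherwise. Since $q^m+1\ge 3$, we have $c_1=1$, so $\lambda(x)\ne 0$. The first condition $c_i=c_{qi}$ holds because $\gcd(q^m+1,q)=1$, whence $(q^m+1)\mid i$ iff $(q^m+1)\mid qi$. The second condition, $c_{(q^m+1)i}=0$, is tautological.

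The only step that requires any real thought is the third condition, and it rests on the observation that $q^m\cong -1\mod{q^m+1}$. Multiplying this congruence by $i$ and adding $j$ gives $q^m i+j\cong j-i\mod{q^m+1}$, and symmetrically $i+q^m j\cong i-j\mod{q^m+1}$; hence $q^m i+j$ and $i+q^m j$ are divisible by $q^m+1$ simultaneously, so $c_{q^m i+j}=c_{i+q^m j}$. This congruence $q^m\cong -1\mod{q^m+1}$ is really the heart of the argument; everything else is routine once the functional has been set up correctly. With all three conditions in hand, $\lambda$ vanishes on $W_m$ while $\lambda(x)=1\ne 0$, so $x\notin W_m$, and $W_m$ is a proper $T$-space of $\onekzerox$.
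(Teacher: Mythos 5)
Your proof is correct and is in substance the same as the paper's: both rest on the spanning set from Proposition \ref{proposition: gen span fermat} together with the same two observations, namely that $\gcd(q,q^m+1)=1$ (so divisibility by $q^m+1$ is preserved under multiplication by $q$) and that $q^m i+j$ and $i+q^m j$ are simultaneously divisible by $q^m+1$. Your functional $\lambda$ is exactly the composite of the paper's two steps --- first discard all monomials whose exponent is a multiple of $q^m+1$, then evaluate at $x=1$ --- so this is a clean dual repackaging of the same argument rather than a genuinely different route.
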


\begin{proof}
 Let $n\ge1$, and suppose to the contrary that $W_n=\onekzerox$, so that $x\in W_{n}$. 
 Then by Proposition \ref{proposition: gen span fermat}, 
 $x$ is a linear combination of terms of the form
 $x^i+x^{qi}$, $i\ge1$, $x^{(q^n+1)j}$, $j\ge1$, and  $x^{q^n i+j}
 +x^{i+q^n j}$ where $i>j\ge1$. Suppose that $x=\sum
 \alpha_i(x^i+x^{qi})+ \sum \beta_j x^{(q^n+1)j}
 +\sum \gamma_{i,j}(x^{q^n i+j}+x^{i+q^n j})$, where $\alpha_i,
 \beta_j,\gamma_{i,j}\in k$. Observe that since $(q,q^n+1)=1$, in
 any summand of the form  $x^i+x^{qi}$, $i$ is a multiple of $q^n+1$ if
 and only if $qi$ is a multiple of $q^n+1$. Since we may move any such
 terms to the sum of terms of the form $x^{(q^n+1)j}$, we may assume 
 that in the linear combination  $\sum \alpha_i(x^i+x^{qi})$, no
 monomial of the form $x^{(q^n+1)j}$ appears. Furthermore, $q^n i+j$ is
 a multiple of $q^n+1$ if and only if $i\cong j\mod{q^n+1}$ if and only if
 $i+q^n j$ is a multiple of $q^n+1$, so we may also assume that no summand
 of the form  $x^{q^n i+j}+x^{i+q^n j}$ contains a summand of the form
 $x^{(q^n+1)i}$. Thus $\sum \alpha_i(x^i+x^{qi})+\sum
 \gamma_{i,j}(x^{q^n i+j}+x^{i+q^n j})=x+\sum \beta_j x^{(q^n+1)j}$, 
 where in the sum on the left, there is no monomial of the form
 $x^{(q^n+1)j}$. Thus we must have $\sum \beta_j x^{(q^n+1)j}=0$, and so
 $x=\sum \alpha_i(x^i+x^{qi})+\sum \gamma_{i,j}(x^{q^n i+j}+x^{i+q^n j})$.  
 However, upon evaluation at $x=1$, this yields $1=0$, which is
 not possible. Thus $x\notin W_{n}$.
\end{proof}
 
 In our search for maximal $T$-spaces, we wondered what might be said about $W_n$
 when $n$ is such that $q^n+1$ is prime. This avenue of speculation led us to investigate $W_n$
 for integers $n$ which are the analogue of the Fermat numbers (precisely the
 case when $q=2$). Thus we were led to investigate $W_n$ for positive integers $n$ of the form $q^m$.
 By Corollary \ref{corollary: gen proper}, we know that for any $m\ge0$, $W_{q^m}$ is a proper $T$-space,
 and we consider such to be candidates for maximal $T$-spaces of $\onekzerox$.

\begin{proposition}\label{proposition: gen infinite number}
 Let $n,m$ be nonnegative integers with $n\ne m$. Then $W_{q^n}+W_{q^m}=\onekzerox$.
\end{proposition}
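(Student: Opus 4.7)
My plan is to prove that $x \in W_{q^n} + W_{q^m}$, which suffices since any $T$-space of $\onekzerox$ containing $x$ must equal $\onekzerox$. Without loss of generality I take $n > m$, and set $N = q^n$ and $M = q^m$. Because $q$ is a power of $2$ and $n > m$, the ratio $N/(2M) = q^{n-m}/2$ is a positive integer $k$, so $N = 2kM$.

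The core of the argument will be an induction on $r$ showing that $x^{q^{N-2rM}+1} \in W_N + W_M$ for every $r$ with $0 \le r \le k$. The base case $r = 0$ is $x^{q^N+1} \in W_N$ by definition of $W_N$. For the inductive step, let $a = N - 2rM \ge 2M$. I would invoke Proposition \ref{proposition: gen span fermat} applied to $W_M$ with $i = q^{a-M}$ and $j = 1$ (which satisfies $i > j$ since $a > M$), obtaining $x^{q^a+1} + x^{q^{a-M}+q^M} \in W_M$; combining with the inductive hypothesis $x^{q^a+1} \in W_N + W_M$ yields $x^{q^{a-M}+q^M} \in W_N + W_M$. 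Since $a - M \ge M$, I would factor $q^{a-M}+q^M = q^M(q^{a-2M}+1)$ and then telescope the identities $x^{qK}+x^K \in \set x+x^q\endset^S \subseteq W_N+W_M$ as $K$ runs through $q^{a-2M}+1, q(q^{a-2M}+1), \ldots, q^{M-1}(q^{a-2M}+1)$ to reduce $x^{q^M(q^{a-2M}+1)}$ modulo $W_N + W_M$ to $x^{q^{a-2M}+1}$, advancing the induction from $r$ to $r+1$.

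At $r = k$ the induction delivers $x^{q^0+1} = x^2 \in W_N + W_M$. From there, since $W_N + W_M$ is a $T$-space and $q$ is even, substituting $x \mapsto x^{q/2}$ in $x^2$ gives $x^q \in W_N + W_M$; combined with $x + x^q \in W_N + W_M$ we obtain $x = (x+x^q) + x^q \in W_N + W_M$, so $W_N + W_M = \onekzerox$.

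The main obstacle is pinpointing the correct inductive parameter: each use of a $W_M$-generator from Proposition \ref{proposition: gen span fermat} lowers the $q$-exponent by $2M$ (not just by $M$), and the characteristic-$2$ divisibility $2M \mid N$ is exactly what is needed for the iteration to terminate at the exponent $q^0+1 = 2$, from which the identity $x + x^q \in W_N+W_M$ together with $T$-space closure finally recovers $x$.
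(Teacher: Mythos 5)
Your argument is correct and is essentially the paper's own proof: with $M=q^{\min(n,m)}$ and $N=q^{\max(n,m)}$, both reduce the generator $x^{q^{N}+1}$ of $W_N$ modulo $W_M$ down to $x^2$ by alternating the exchange relation $x^{q^{M}i+j}\equiv x^{i+q^{M}j}$ of Proposition \ref{proposition: gen span fermat} with the relation $x^{qi}\equiv x^{i}$, and then recover $x$ from $x^2$ via the substitution $x\mapsto x^{q/2}$ together with the generator $x+x^q$. Your induction decreasing the exponent $N-2rM$ by $2M$ per step is the same descent that the paper carries out by lowering its parameter $a$ by $2$ from the odd integer $q^t-1$ down to $1$.
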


\begin{proof}
 It suffices to consider only $m>n\ge0$, and so we prove that
 for all $n\ge0$ and $t\ge1$, $x^{q^{q^{n+t}}+1}\cong x\mod{W_{q^n}}$.
 Let $n\ge0$, and $t\ge1$. By Proposition \ref{proposition: gen span fermat}, we have 
 $$
  x^{q^{q^n}i+j}\cong x^{i+q^{q^n}j} \mod{W_{q^n}}
 $$
 for every $i,j\ge1$. In particular, when $i=1$ and $j=q^{q^{n+t}-q^n}$, we obtain
 $$
  x^{q^{q^n}+q^{q^{n+t}-q^n}}\cong x^{1+q^{q^n}q^{q^{n+t}-q^n}} \mod{W_{q^n}};
 $$
 that is, $x^{q^{q^n}+q^{q^n(q^t-1)}}\cong x^{1+q^{q^{n+t}}} \mod{W_{q^n}}$. Next, we
 prove that for any integer $a\ge2$, $x^{q^{q^n}+q^{q^na}}\cong
 x^{q^{q^n}+q^{q^n(a-2)}}\mod{W_{q^n}}$.
 We have 
 \begin{align*}\
   x^{q^{q^n}+q^{q^na}} &= x^{q^{q^n}+q^{q^n}q^{q^n(a-1)}}=x^{q^{q^n}(1+q^{q^n(a-1)})}\\
                        &\cong x^{1+q^{q^n(a-1)}}= x^{1+q^{q^n}q^{q^n(a-2)}}\quad\text{since
                        $x\cong x^q \mod{W_{q^n}}$}\\ 
			&\cong x^{q^{q^n}+q^{q^n(a-2)}}\mod{W_{q^n}}.
 \end{align*}
 We now apply this result iteratively, starting with $a=q^t-1$, an odd integer, drawing the conclusion that
 $$
   x^{q^{q^n}+q^{q^n(q^t-1)}}\cong x^{q^{q^n}+q^{q^n}}=(x^{q^{q^{n}}})^2\cong x^2\mod{W_{q^n}}.
 $$  
 Thus we have established that $x^{1+q^{q^{n+t}}}\cong x^{q^{q^n}+q^{q^n(q^t-1)}}
 \cong x^2 \mod{W_{q^n}}$. Since $x^{1+q^{q^{n+t}}}\in W_{q^{n+t}}$, we obtain 
 that $x^2\in W_{q^n}+W_{q^{n+t}}$.  Now, $q=2^s$ for some $s\ge1$, and thus we 
 have $x^q=(x^{2^{s-1}})^2\in W_{q^n}+W_{q^{n+t}}$. Finally, as $x+x^q\in 
 W_{q^n}+W_{q^{n+t}}$, we have $x\in W_{q^n}+W_{q^{n+t}}$, as required.
\end{proof}


\begin{corollary}\label{corollary: gen inf many max t-spaces}
 There are infinitely many maximal $T$-spaces of $\onekzerox$ that contain $W_0$.
\end{corollary}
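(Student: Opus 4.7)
The plan is to manufacture an infinite family of pairwise distinct maximal $T$-spaces by applying Zorn's lemma to the $T$-spaces $W_{q^m}$ for $m=0,1,2,\ldots$, and relying on Proposition \ref{proposition: gen infinite number} to separate them. For each nonnegative integer $m$, Corollary \ref{corollary: gen proper} tells us that $W_{q^m}$ is a proper $T$-space of $\onekzerox$, so Lemma \ref{lemma: free have maximal} produces a maximal $T$-space $M_m$ of $\onekzerox$ with $W_{q^m}\subseteq M_m$. Since each $W_{q^m}$ contains $x+x^q$ (and hence $W_0$) by its very definition, every $M_m$ likewise contains $W_0$.

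The remaining task is to verify that distinct indices yield distinct maximal $T$-spaces. Suppose, for contradiction, that $M_n=M_m$ for some $n\ne m$. Then this common maximal $T$-space would contain $W_{q^n}+W_{q^m}$, which by Proposition \ref{proposition: gen infinite number} equals $\onekzerox$, contradicting the properness of $M_n$. Thus $\{M_m\}_{m\ge 0}$ is an infinite family of pairwise distinct maximal $T$-spaces of $\onekzerox$, each containing $W_0$.

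No new obstacle remains at this point: the substantive content---establishing that the sum $W_{q^n}+W_{q^m}$ exhausts $\onekzerox$, via the delicate exponent manipulations in Proposition \ref{proposition: gen infinite number}, and the properness of $W_{q^m}$ from Corollary \ref{corollary: gen proper}---has already been discharged in the preceding results. The corollary itself is then simply a routine Zorn-plus-pigeonhole consequence of those two facts together with Lemma \ref{lemma: free have maximal}.
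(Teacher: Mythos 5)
Your proposal is correct and follows essentially the same route as the paper's proof: properness of each $W_{q^m}$ from Corollary \ref{corollary: gen proper}, existence of a maximal $T$-space above each via Lemma \ref{lemma: free have maximal}, and pairwise distinctness via Proposition \ref{proposition: gen infinite number}. Your added remark that each $W_{q^m}$ contains $\set x+x^q\endset^S=W_0$ (so the maximal $T$-spaces all contain $W_0$) is a point the paper leaves implicit, and it is correct.
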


\begin{proof}
 By Corollary \ref{corollary: gen proper}, $W_{q^n}$ is a proper $T$-space for 
 every $n\ge0$. For each $n\ge0$, let $M_n$ denote some maximal $T$-space containing
 $W_{q^n}$. Now, let $m,n\ge0$ with $m\ne n$, and suppose that
 $M_m=N_n$. Then by Proposition \ref{proposition: gen infinite number}, 
 we would have $M_n=M_n+M_m=\onekzerox$,
 contradicting the fact that $M_n$ is a maximal $T$-space of $\onekzerox$.
\end{proof}

 We have not yet addressed the question as to whether or not
 $W_{q^n}$ is itself maximal. We shall investigate this issue now,
 but only in the case where $q=2$. To begin with, we shall study $W_{2^0}=W_1$.
 As a consequence of Proposition \ref{proposition: gen span fermat}, we 
 know that $W_1$ is a proper $T$-space of $\gftwoxz$.

\begin{proposition}\label{proposition: w1}
 $W_1$ is a maximal $T$-space of $\gftwoz$.
\end{proposition}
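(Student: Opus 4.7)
The plan is to show that the quotient $\gftwoz / W_1$ is one-dimensional over $\gftwo$, spanned by the coset of $x$. Once this is established, maximality of $W_1$ follows immediately: if $V$ is a $T$-space with $W_1 \subsetneq V$ and $f \in V - W_1$, then the coset of $f$ modulo $W_1$ is a nonzero element of a one-dimensional $\gftwo$-space, hence equals the coset of $x$, so $f - x \in W_1 \subseteq V$, giving $x \in V$ and thus $V = \gftwoz$.

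By Proposition \ref{proposition: gen span fermat} with $q = 2$ and $n = 1$, $W_1$ is linearly spanned by $\set x^i + x^{2i}, x^{3i} \mid i \ge 1\endset \cup \set x^{2i+j} + x^{i+2j} \mid i > j \ge 1\endset$. It thus suffices to show that for every $n \ge 1$, $x^n \cong 0 \mod{W_1}$ when $3 \mid n$, while $x^n \cong x \mod{W_1}$ when $3 \nmid n$. The first is immediate from the $x^{3i}$ in the spanning set. For the second, I would proceed by strong induction on $n$: the base $n = 1$ is trivial; if $n = 2m$ is even, the relation $x^m + x^{2m} \in W_1$ gives $x^n \cong x^m \mod{W_1}$, and since $3 \nmid n$ iff $3 \nmid m$, the induction hypothesis applies; if $n \ge 5$ is odd, I would apply the spanning element with $i = (n-1)/2$ and $j = 1$ (valid since $i > j \ge 1$ holds for $n \ge 5$), obtaining $x^n \cong x^{(n+3)/2} \mod{W_1}$, and then observe that $(n+3)/2 < n$ while $2 \cdot (n+3)/2 = n+3 \cong n \not\cong 0 \mod 3$ forces $3 \nmid (n+3)/2$, so the induction hypothesis again applies.

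The main obstacle is arranging the descent so that the new exponent in the odd case is both strictly smaller than $n$ and still coprime to $3$; the specific choice $i = (n-1)/2$, $j = 1$ accomplishes this cleanly. Combined with Corollary \ref{corollary: gen proper}, which guarantees that $x \notin W_1$ so the quotient is genuinely nonzero, this establishes that $\gftwoz / W_1$ is one-dimensional over $\gftwo$, and hence that $W_1$ is a maximal $T$-space of $\gftwoz$.
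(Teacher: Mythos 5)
Your proof is correct and takes essentially the same route as the paper: both arguments reduce every power $x^n$ modulo $W_1$ to $0$ (when $3\mid n$) or to $x$ (otherwise), so that the quotient is one-dimensional over $\gftwo$ and maximality follows. The only difference is the bookkeeping of the descent --- you use the spanning relation $x^{2i+j}+x^{i+2j}$ with $i=(n-1)/2$, $j=1$ to get $x^n\equiv x^{(n+3)/2}$ for odd $n\ge 5$, whereas the paper derives $x^5\equiv x$ and $x^n\equiv x^{n-3}$ for $n\ge 7$ from cubes of elements of $W_1$.
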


\begin{proof}  
 Let $f\in \gftwoz-W_1$. Since $x^i\cong x^{2i}\mod{W_1}$ for every
 positive integer $i$, we may assume that $f$ has no monomial summands
 of even degree. Furthermore, observe that $(x+x^2)^3\in W_1$, and since
 $(x+x^2)^3=x^3+x^4+x^5+x^6$ and $x^3, x^6\in W_1$, it follows that
 $x^4+x^5\in W_1$. Thus $x^5\cong x^4\cong x^2\cong x\mod{W_1}$. As
 well, for every integer $n\ge2$, we have $(x(x+x^n))^3\in W_1$, so 
 $x^3(x^3+x^{n+2}+x^{2n+1}+x^{3n})=x^6+x^{n+5}+x^{2n+4}+x^{3n+3}\in
 W_1$, and thus for every integer $n\ge2$, $x^{n+5}+x^{2(n+2)}\in W_1$.
 But then $x^{n+5}\cong x^{2(n+2)}\cong x^{n+2}\mod{W_1}$ for every
 integer $n\ge2$. That is; for every integer $n\ge7$, $x^n\cong
 x^{n-3}\mod{W_1}$. It follows now that in $f$, every monomial of odd
 degree greater than or equal to 7 can be replaced by one of odd degree
 at most 5. Finally, since $x^3\in W_1$, we may assume that $f$ does not
 have $x^3$ as a summand, and since $x^5\cong x\mod{W_1}$, we may assume
 that $f$ does not have $x^5$ as a summand. Thus $f=x$, and so $W_1+\set
 f\endset^S=\gftwoz$. 
\end{proof}

 Next, we study $W_2$. Again, as a result of Proposition \ref{proposition: gen span fermat}, we know that $W_2$
 is a proper $T$-space of $\gftwoz$.

\begin{proposition}
 $W_2$ is a maximal $T$-space of $\gftwoz$, and moreover, $x^7\notin W_2$.
\end{proposition}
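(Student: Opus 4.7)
My approach exploits the observation that $q^n = 4 \equiv 2^2 \pmod{15}$, so the orbits of multiplication by $2$ on $\mathbb{Z}/15\mathbb{Z}$ are compatible with each family of generators in the spanning set of $W_2$ from Proposition \ref{proposition: gen span fermat}. These orbits are $\set 0\endset$, $\set 5, 10\endset$, $\set 1, 2, 4, 8\endset$, $\set 3, 6, 9, 12\endset$, and $\set 7, 11, 13, 14\endset$. To establish $x^7 \notin W_2$, I would define the $\gftwo$-linear functional $\phi\from \gftwoz \to \gftwo$ by $\phi(x^k)=1$ exactly when $k \bmod 15 \in \set 7,11,13,14\endset$ and $\phi(x^k)=0$ otherwise, then check that $\phi$ annihilates every element of the spanning set of Proposition \ref{proposition: gen span fermat}: on $x^i+x^{2i}$ because $i$ and $2i$ lie in the same mult-by-$2$ orbit mod $15$; on $x^{5i}$ because $5i \bmod 15 \in \set 0,5,10\endset$; and on $x^{q^n i+j}+x^{i+q^n j}=x^{4i+j}+x^{i+4j}$ because $4(4i+j) = 16i+4j \equiv i+4j \pmod{15}$, so the two exponents share an orbit. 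Since $\phi(x^7)=1$, this gives $x^7 \notin W_2$.

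For maximality, I would prove by induction on $n$ that every monomial $x^n$ is congruent modulo $W_2$ to one of $0$, $x$, $x^3$, or $x^7$. The cases $1 \le n \le 14$ are handled directly using the relations $x^k \equiv x^{2k}$, $x^{5k} \equiv 0$, and item $(iii)$ of Proposition \ref{proposition: gen span fermat}. For $n \ge 15$: if $5 \mid n$ then $x^n \in W_2$; if $n$ is even then $x^n \equiv x^{n/2} \pmod{W_2}$; and if $n$ is odd and not a multiple of $5$, an elementary check (the interval has length $(n-9)/20 \ge 1$ for $n \ge 29$, and the remaining cases $n \in \set 17,19,21,23,27\endset$ are handled individually) shows that $[(n+1)/5,(n-1)/4]$ contains an integer $i$, giving $j = n-4i$ with $i>j\ge 1$ and so $x^n \equiv x^{i+4j} \pmod{W_2}$ with $i+4j < n$. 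Consequently every $f \in \gftwoz$ satisfies $f \equiv ax + bx^3 + cx^7 \pmod{W_2}$ for some $a,b,c \in \gftwo$; and if $f \notin W_2$ then $(a,b,c) \ne (0,0,0)$.

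The remaining ingredient is the four congruences $(x+x^2)^3 \equiv x$, $(x+x^2)^7 \equiv x$, $(x+x^7)^3 \equiv x^3$, and $(x+x^7)^7 \equiv x$ modulo $W_2$, each verified by expanding in characteristic $2$ and reducing the resulting monomials via the mod-$15$ classification above. Given $f \equiv ax+bx^3+cx^7 \pmod{W_2}$ with $(a,b,c)\ne (0,0,0)$, the substitution $x \mapsto x+x^2$ (noting $x+x^2 \in W_2$) yields $f(x+x^2) \equiv (b+c)\,x \pmod{W_2}$, which handles every case with $b+c=1$ as well as the case $b=c=0$ (where $a=1$ and so $f \equiv x$). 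In the remaining case $b=c=1$, the substitution $x \mapsto x+x^7$ gives $f(x+x^7) \equiv (a+1)\,x + x^3 + a\,x^7 \pmod{W_2}$, so $f + f(x+x^7) \equiv x + (a+1)\,x^7 \pmod{W_2}$; this is $\equiv x$ when $a=1$, while when $a=0$ the resulting element $x + x^7 \in W_2 + \set f\endset^S$ can be converted to $x$ by another application of $x \mapsto x+x^2$, since $(x+x^2)+(x+x^2)^7 \equiv x \pmod{W_2}$. The chief obstacle is the careful bookkeeping of the seven nonzero cases of $(a,b,c)$ together with the verification of the four key congruences, but each ultimately reduces to routine mod-$15$ arithmetic on exponents.
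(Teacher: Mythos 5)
Your proposal is correct, and the overall strategy coincides with the paper's: reduce every element of $\gftwoz$ modulo $W_2$ to a combination of $x$, $x^3$, $x^7$, then show each nonzero combination together with $W_2$ generates all of $\gftwoz$. Your execution differs in two worthwhile ways, though. First, for $x^7\notin W_2$ the paper argues indirectly: it shows $x\in W_2+\set x^7\endset^S$ and then invokes the properness of $W_2$ from Corollary \ref{corollary: gen proper}; your linear functional $\phi$ supported on the multiplication-by-$2$ orbit $\set 7,11,13,14\endset$ of $\mathbb{Z}/15\mathbb{Z}$ separates $x^7$ from $W_2$ directly, and it does annihilate all three families of the spanning set of Proposition \ref{proposition: gen span fermat} (the key point being $16\cong 1\mod{15}$, so $4i+j$ and $i+4j$ lie in a common orbit); this also makes transparent why $\set 0,x,x^3,x^7\endset$ is exactly the right set of representatives. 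Second, your endgame replaces the paper's six ad hoc cases by the two substitutions $x\mapsto x+x^2$ and $x\mapsto x+x^7$ applied to $f\cong ax+bx^3+cx^7$; the four congruences you list check out against the mod-$15$ classification, and the split on $(a,b,c)$ covers all seven nonzero triples, including the final conversion of $x+x^7$ to $x$ via $(x+x^2)+(x+x^2)^7\cong x\mod{W_2}$. A side benefit is that your treatment of $f=x^3$ (namely $(x+x^2)^3\cong x$) is self-contained, whereas the paper's Case 1 leans on the maximality of $W_1$ from Proposition \ref{proposition: w1}. Your exponent-reduction argument via the interval $[(n+1)/5,(n-1)/4]$ is also sound, and the five exceptional values $17,19,21,23,27$ each admit a valid pair $(i,j)$ with $i>j\ge1$.
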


\begin{proof}  
 Let $f\in \gftwoz-W_2$. Since $x^i\cong x^{2i}\mod{W_2}$ for every
 positive integer $i$, we may assume that $f$ has no monomial summands
 of even degree. Furthermore, since for every $j>i\ge1$, $x^{i+4j}\cong
 x^{4i+j}$ and $i+4j>4i+j$, and every odd integer greater than 16 can be
 written in the form $i+4j$ for some $0<i<4\le j$, it follows that every
 monomial in $f$ of (odd) degree greater than $16$ can be reduced to an
 odd degree less than $16$. As well, $13=4(3)+1$ and $9=4(2)+1$, so
 $x^{13}\cong x^{7}\mod{W_2}$ and $x^{9}\cong x^{6}\cong x^3\mod{W_2}$. 
 Moreover, $11=4(2)+3$, so $x^{11}\cong x^{14}\cong x^7\mod{W_2}$. Thus
 (since $x^5\cong x^{15}\cong 0\mod{W_2}$) we may assume  that $f$ is a
 sum of monomials in  $\set x,x^3,x^7\endset$. Furthermore, we have
 $19=4(4)+3$, $23=4(5)+3$, $4(3)+5=17=4(4)+1$, $27=4(6)+3$, and $31=4(7)+3$, 
 so $x^{19}\cong x^{16}\cong x\mod{W_2}$, $x^{23}\cong x^{17}\cong x^8\cong
 x\mod{W_2}$, $x^{27}\cong x^{18}\cong x^9\cong x^3 \mod{W_2}$,
 and $x^{31}\cong x^{19}\cong x\mod{W_2}$. Finally, $21=4(5)+1$ and so
 $x^{21}=x^{4(5)+1}\cong x^9\cong x^3\mod{W_2}$. We shall apply these
 observations as needed below.
 
 \noindent Case 1: $f=x^3$. Observe that $W_1+\set x^5\endset^S=
 W_2+\set x^3\endset^S$. It was observed in the proof of
 Proposition \ref{proposition: w1} that $x^5\cong x\mod{W_1}$, so $x^5\notin W_1$. 
 By Proposition \ref{proposition: w1}, $W_1+\set x^5\endset^S=\gftwoz$, so
 $W_2+\set x^3\endset^S=\gftwoz$.
 
 \noindent Case 2: $f=x^7$. We have $0\cong
 (x+x^2)^7=x^7+x^8+x^9+x^{10}+x^{11}+x^{12}+x^{13}+x^{14}
 \cong x\mod{W_2+\set x^7\endset^S}$, so $x\in
 W_2+\set x^7\endset$. In particular, $x^7\notin W_2$.
 
 \noindent Case 3: $f=x+x^3$. Then $0\cong (x+x^2)+(x+x^2)^3 =
 (x+x^2)+x^3+x^4+x^5+x^6\cong x^4\cong x\mod{W_2+\set
 x+x^3\endset^S}$, so $x\in W_2+\set x+x^3\endset^S$.
 
 \noindent Case 4: $f=x+x^7$. We have $(x+x^2)+(x+x^2)^7\in W_2+\set
 x+x^7\endset^S$, and since $x+x^2\in W_2$, it follows that
 $(x+x^2)^7\in W_2+\set x+x^7\endset^S$. As $(x+x^2)^7=x^7+x^8+x^9+x^{10}+x^{11}+x^{12}+x^{13}+x^{14}
 \cong x\mod{W_2}$, it follows that $x\in W_2+\set x+x^7\endset^S$.
 
 \noindent Case 5: $f=x^3+x^7$. Modulo $W_2+\set
 x^3+x^7\endset^S$, we have
 \begin{align*}
   0&\cong (x+x^5)^3+(x+x^5)^7\\
   &=x^3+x^7+x^{11}+x^{15}+x^7+x^{11}
                       +x^{15}+x^{19}+x^{23}+x^{27}+x^{31}+x^{35}\\
   &\cong x^7+x+x+x^{3}+x\cong  x.
 \end{align*}
 Thus $x\in W_2+\set x^3+x^7\endset^S$.
 
 \noindent Case 6: $f=x+x^3+x^7$. Then $(x+x^3)+(x+x^3)^3+(x+x^3)^7\in
 W_2+\set f\endset^S$. Modulo $W_2$, we have 
 $(x+x^3)+(x+x^3)^3+(x+x^3)^7=
 x+x^3+x^3+x^5+x^7+x^9+x^7+x^9+x^{11}+x^{13}+x^{15}+x^{17}+x^{19}+x^{21}  
 =x+x^5+x^{11}+x^{13}+x^{15}+x^{17}+x^{19}+x^{21}\cong
 x+x^{11}+x^{13}+x^{17}+x^{19}+x^{21}\cong x+x^3\mod{W_2}$, and so $x+x^3\in W_2+\set
 f\endset^S$. But then $W_2+\set x+x^3\endset^S\subseteq W_2+\set
 f\endset^S$, and so by Case 3, $W_2+\set f\endset^S=\kzerox$.

 This completes the case-by-case analysis, and thus $W_2$ is a maximal $T$-space.  
\end{proof}

 While we have not yet determined the status of $W_{2^n}$ for
 $n>1$, we do know that it is not necessarily the case that the
 $T$-space $\set x+x^2,x^p\endset^S$ is
 proper for every prime $p$. In fact, as we now show, $\set x+x^2,x^7\endset^S=\gftwoxz$. We
 remark that since $q=2$ in this discussion, $q$-homogeneity is a
 non-condition since $q-1=1$.
 
 For convenience, we shall let $P=\set x+x^2,x^7\endset^S$.

 For any $i,j\ge1$, $(x^i+x^j)^7-x^{7i}-x^{7j}\in P$. Since $\choice 7,t
 \cong 1\mod{2}$ for every $t$ with $0\le t\le 7$, we have 
 \[
   x^{6i+j}+x^{5i+2j}+x^{4i+3j}+x^{3i+4j}+x^{2i+5j}+x^{i+6j}\in P\tag{1}
 \]
 for all $i,j\ge1$. In (1), set $i=j+1$ to obtain
 \[
  x^{7j+6}+x^{7j+5}+x^{7j+4}+x^{7j+3}+x^{7j+2}+x^{7j+1}\in P\tag{2}
 \]
 for all $j\ge1$. Then in (2), set $j=1$, $j=2$, $j=3$, and $j=4$, 
 respectively, and use everywhere possible the fact that for every $t\ge1$,
 $x^t\cong x^{2t}\mod{P}$ to obtain
 \begin{gather}
  x+x^3+x^5+x^9+x^{11}+x^{13}\in P\tag{3}\\
  x+x^5+x^9+x^{15}+x^{17}+x^{19}\in P\tag{4}\\
  x^3+x^{11}+x^{13}+x^{23}+x^{25}+x^{27}\in P\tag{5}\\
  x+x^{15}+x^{17}+x^{29}+x^{31}+x^{33}\in P\tag{6}  
 \end{gather}
 Next, set $i=j+2$ in (1) to obtain
 \[
  x^{7j+12}+x^{7j+10}+x^{7j+8}+x^{7j+6}+x^{7j+4}+x^{7j+2}\in P\tag{7}
 \]
 for all $j\ge1$, then in (7), set $j=1$ and $j=3$, respectively, to obtain
 \begin{gather}
  x^9+x^{11}+x^{13}+x^{15}+x^{17}+x^{19}\in P\tag{8}\\
  x^{23}+x^{25}+x^{27}+x^{29}+x^{31}+x^{33}\in P\tag{9}
 \end{gather}
 From (4) and (8), we obtain that
 \[
  x+x^5+ x^{11}+x^{13}\in P\tag{10}
 \]
 and then from (3) and (10) we get
 \[ 
  x^3+x^9\in P.\tag{11}
 \]
 As well, from (5) and (9) we obtain
 \[
  x^3+x^{11}+x^{13}+x^{29}+x^{31}+x^{33}\in P\tag{12}
 \]
 and then from (6) and (12) we get
 \[
  x+x^3+x^{11}+x^{13}+x^{15}+x^{17}\in P.\tag{13}
 \]
 Then (13) and (4) gives
 \[
  x^3+x^5+x^9+x^{11}+x^{13}+x^{19}\in P.\tag{14}
 \]
 By (11), we have $x^3+x^9\in P$, so from (14) we now obtain
 \[
  x^5+x^{11}+x^{13}+x^{19}\in P.\tag{15}
 \]
 Now (15) and (10) yield
 \[
  x+x^{19}\in P,\tag{16}
 \]
 so $x\cong x^{19}\mod{P}$.
 Now from (11), we have 
 $$
 (x^i+x^j)^3+(x^i+x^j)^9-x^{3i}-x^{3j}-x^{9i}-x^{9j}\in P,
 $$
 and so
 \[
  x^{2i+j}+x^{i+2j}+x^{8i+j}+x^{i+8j}\in P\tag{17}
 \]
 for all $i,j\ge1$. Set $i=j+1$ in (17) to get
 \[
  x^{3j+2}+x^{3j+1}+x^{9j+8}+x^{9j+1}\in P\tag{18}
 \]
 for all $j\ge1$. In (18), set $j=2$ and $j=3$, respectively, to get (note that $x^7,x^{35}\in P$)
 \begin{gather}
  x+x^{13}+x^{19}\in P\tag{19}\\
  x^{5}+x^{11}\in P.\tag{20}
 \end{gather}
 From (16) and (19), we obtain $x^{13}\in P$, and this, together with (20) and (10), gives $x\in P$.
 
 Thus for $p=3,5$ (the first two Fermat primes), $W_1=\set x+x^2,x^{3}\endset^S$
 and $W_2=\set x+x^2,x^{5}\endset^S$ are maximal
 $T$-spaces, while for 7, the first odd non-Fermat prime, we have $\set x+x^2,x^{7}\endset^S=\gftwoxz$.
 There are many interesting questions that arise from this exploration.
 For example, is it true that $\set x+x^2,x^p\endset^S$ is a maximal $T$-space for every
 Fermat prime $p$? If so, are the Fermat primes the only primes for which
 $\set x+x^2,x^p\endset^S$ is maximal? For $n>1$, if $W_{2^n}$ is not maximal, can one
 describe the maximal $T$-spaces that contain it?

\section{Summary of the nonunitary case}

\begin{theorem}
 For any field $k$, and any nonempty set $X$, the following hold.
  \begin{list}{(\roman{parts})}{\usecounter{parts}} 
  \item $Z$ is a maximal $T$-ideal of $\kzerox$, and if $k$ is infinite, $Z$ is the 
	maximum $T$-ideal of $\kzerox$. If $k$ is finite of order $q$, 
	then $\kzerox$ has exactly one other maximal $T$-ideal; namely $W_0=T^{(2)}+\set x-x^q\endset^T$.
  \item Every maximal $T$-ideal of $\kzerox$ is a maximal $T$-space of $\kzerox$.
  \item If $k$ is infinite, then $Z$ is the only maximal $T$-space of $\kzerox$.
  \item If $k$ is finite, then $\kzerox$ has infinitely many maximal $T$-spaces.
  \end{list}
\end{theorem}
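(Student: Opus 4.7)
The plan is straightforward: this theorem is a consolidated summary of results already proved in the paper, so I would write the proof simply by itemizing the four claims and citing, for each, the earlier statement from which it follows. I would walk through the parts in the order (i), (iii), (ii), (iv), since the infinite-field case of (ii) is most efficiently deduced from (iii) rather than argued on its own.

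For part (i), the first sentence, that $Z$ is a maximal $T$-ideal of $\kzerox$ and is the maximum $T$-ideal when $k$ is infinite, is exactly Corollary \ref{corollary: gen of [S] z result}; the second sentence, that for finite $k$ of order $q$ the only other maximal $T$-ideal is $W_0=T^{(2)}+\set x-x^q\endset^T$, is exactly Corollary \ref{corollary: finite field version}. For part (iii), Proposition \ref{proposition: infinite field t-space} tells us that every proper $T$-space of $\kzerox$ is contained in $Z$; since $Z$ is itself a proper $T$-space by part (i), it is the unique maximal $T$-space of $\kzerox$. Part (ii) then splits by cardinality of $k$: when $k$ is infinite the only maximal $T$-ideal is $Z$, and by (iii) this $Z$ is the (unique) maximal $T$-space; when $k$ is finite the maximal $T$-ideals are $Z$ and $W_0$ by (i), and Proposition \ref{proposition: max t-ideals are max t-spaces} asserts that both are maximal $T$-spaces.

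For part (iv), I would invoke the bijection $\pi\from M(\kzerox)\to M(\onekzerox)$ established in the unnumbered proposition immediately before Section 3, which reduces the task to showing that $M(\onekzerox)$ is infinite. In characteristic $p>2$ this is Corollary \ref{corollary: p>2 has infinitely many}, and in characteristic $2$ this is Corollary \ref{corollary: gen inf many max t-spaces}. The only real care needed is the case split between $p>2$ and $p=2$ within (iv); beyond that, there is no substantive obstacle, since every clause of the theorem translates into a direct appeal to a previously proved result.
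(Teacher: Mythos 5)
Your proposal is correct and follows essentially the same route as the paper: each part is discharged by citing the earlier results (Corollaries \ref{corollary: gen of [S] z result} and \ref{corollary: finite field version} for (i), Proposition \ref{proposition: infinite field t-space} for (iii), Proposition \ref{proposition: max t-ideals are max t-spaces} for (ii), and the reduction to $\onekzerox$ plus Corollaries \ref{corollary: p>2 has infinitely many} and \ref{corollary: gen inf many max t-spaces} for (iv)). Your minor reorganization --- deducing the infinite-field case of (ii) from (iii), and citing the $M(\kzerox)\to M(\onekzerox)$ bijection directly rather than Corollary \ref{corollary: bijective mapping on maximal t-spaces} --- is if anything slightly cleaner, but it is not a substantively different argument.
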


\begin{proof}

 (i) was proven in Theorem 3 of \cite{S} for the case when $X$ is infinite, and in 
 Corollary \ref{corollary: gen of [S] z result} when $X$ is finite and $k$ is infinite,
 and in Corollary \ref{corollary: finite field version} when both $X$ and $k$ are finite.
 (ii) follows from Proposition \ref{proposition: max t-ideals are max t-spaces}, and (iii) follows 
 from Proposition \ref{proposition: infinite field t-space}. Finally, 
 (iv) follows from Corollary \ref{corollary: bijective mapping on maximal t-spaces} together 
 with Corollary \ref{corollary: p>2 has infinitely many} for
 the case of characteristic $p>2$, and by Corollary
 \ref{corollary: gen inf many max t-spaces} for the case of characteristic 2.
\end{proof}  
 
\section{The unitary case}

 Let $k$ be an infinite field, and let $X$ be a nonempty set. 
 Then $\konex$ has a maximum $T$-ideal;
 namely $T^{(2)}$. Set  $Y=T^{(2)}+\set x^{char(k)}\endset^S$, where
 in the characteristic zero case, we interpret $x^0$ as $1$. Note that
 in every case we have $k\subseteq Y$.

\begin{proposition}
 If $k$ is an infinite field, then $Y$ is a maximum proper $T$-space of $\konex$.
\end{proposition}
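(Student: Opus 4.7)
The plan is to establish two things: that $Y$ is a proper $T$-space of $\konex$, and that every proper $T$-space of $\konex$ is contained in $Y$. The engine in both parts is that since $k$ is infinite, every $T$-space of $\konex$ is closed under extraction of multihomogeneous components (the standard Vandermonde argument, as used for example in Proposition~\ref{proposition: infinite field t-space}), and $Y$ itself inherits this closure as a sum of two such $T$-spaces.

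To show $Y$ is proper, I would check that $x\notin Y$ by looking at the multidegree $(1,0,0,\dots)$ component. Every element of $T^{(2)}$ has trivial multihomogeneous components of total degree less than $2$, since the ideal is generated by a commutator. Every element of $\set x^{char(k)}\endset^S$ is a $k$-linear combination of $f^{char(k)}$ for $f\in\konex$ (with $f^0$ interpreted as $1$), and the multihomogeneous components of such expressions have multidegrees of the form $char(k)\cdot \beta$ (necessarily zero in characteristic zero). In all cases the $(1,0,0,\dots)$ component of an element of $Y$ vanishes; since $x$ is itself such a component, $x\notin Y$ and $Y\subsetneq \konex$.

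For the maximality part, let $V$ be a $T$-space of $\konex$ with $V\not\subseteq Y$ and pick $f\in V\setminus Y$. Because $V$ and $Y$ are both closed under multihomogeneous decomposition, some multihomogeneous component $g$ of $f$ lies in $V\setminus Y$; let $\alpha$ be its multidegree. Then $\alpha\ne 0$ (otherwise $g\in k\subseteq Y$). In characteristic $p>0$ I would rule out $\alpha=p\beta$: the non-commutative monomial $m=x_1^{b_1}x_2^{b_2}\cdots$ of multidegree $\beta$ satisfies $m^p\in\set x^{char(k)}\endset^S$, and the image of $g$ in $\ckonex=\konex/T^{(2)}$, being multihomogeneous of multidegree $\alpha$ in a commutative ring, is a nonzero scalar multiple $cx^\alpha$; comparing images gives $g-cm^p\in T^{(2)}$, placing $g\in cm^p+T^{(2)}\subseteq Y$ and contradicting the choice of $g$. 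Hence some coordinate $a_i$ of $\alpha$ is invertible in $k$; after relabeling, take $a_1=n$ invertible.

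Applying the endomorphism sending $x_j\mapsto 1$ for $j\ne 1$ and fixing $x_1=x$ collapses $g$ to $cx^n$, and since $c\ne 0$ rescaling gives $x^n\in V$. Next, for each $\beta\in k$ the endomorphism $x\mapsto x+\beta$ produces
$(x+\beta)^n=\sum_{i=0}^n\binom{n}{i}\beta^{n-i}x^i\in V$; picking $n+1$ distinct scalars $\beta$ (possible because $k$ is infinite) and inverting the resulting Vandermonde system extracts each $\binom{n}{i}x^i$ into $V$, and in particular $nx\in V$, whence $x\in V$ by invertibility of $n$. Finally, the endomorphism $x\mapsto h$ for any $h\in\konex$ puts $h\in V$, so $V=\konex$, contradicting that $V$ was proper. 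The main subtlety is the case analysis ruling out $\alpha=p\beta$ in positive characteristic; once some coordinate of $\alpha$ is known to be invertible in $k$, the reduction from $x^n\in V$ to $x\in V$ is the standard translate-and-Vandermonde trick.
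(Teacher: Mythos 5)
Your argument for the main containment is essentially the paper's: pass to a multihomogeneous component $g\in V\setminus Y$, note that modulo $T^{(2)}$ it is a nonzero scalar times a monomial, rule out all exponents being divisible by $p$ (else $g\in \{x^{p}\}^S+T^{(2)}=Y$ via $m^p$), substitute $1$ for all but one variable to get $x^n\in V$ with $n$ invertible in $k$, and then translate and Vandermonde to extract $nx$. That all works. The one genuine error is in your properness argument (a step the paper omits entirely): it is not true that the multihomogeneous components of $f^{p}$ have multidegree of the form $p\beta$, because the free algebra is not commutative and $(x+y)^p\ne x^p+y^p$; for instance $(x+y)^2=x^2+xy+yx+y^2$ in characteristic $2$ has the component $xy+yx$ of multidegree $(1,1)$. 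The conclusion you need ($x\notin Y$) survives for a nearby reason: writing $f=c+g$ with $c\in k$ central gives $f^p=c^p+g^p$, and every multihomogeneous component of $g^p$ has total degree at least $p\ge 2$, so no element of $T^{(2)}+\{x^{p}\}^S$ has a nonzero component of total degree $1$. With that one sentence repaired, your proof is correct and follows the paper's route, with the properness verification as a small bonus the paper leaves implicit.
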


\begin{proof}
 We are to prove that every proper $T$-space of $\konex$ is 
 contained in $Y$. Suppose that $V$ is a $T$-space  of 
 $\konex$ that is not contained in $Y$. Let $f\in V-Y$ be 
 essential (that is, every variable that appears in any 
 monomial of $f$ appears in every monomial of $f$), say 
 on variables $x_1,x_2,\ldots,x_t$. Since $k$ is  infinite,
 each homogeneous component of $f$ belongs to $V$, and so we 
 may assume that  $f=\alpha x_1^{i_1}\cdots x_t^{i_t}
 +u(x_1,x_2,\ldots,x_t)$ for some positive integers $i_1,i_2,
 \ldots,i_t$ and some $u(x_1,\ldots,x_t)\in T^{(2)}$. If 
 $char(k)=p>0$, and every $i_j$ divisible by $p$, then $f\in Y$, 
 which is not the case. Thus if $char(k)=p>0$, then there exists
 $j$ such that $i_j$ is not divisible by $p$. On the other hand, if
 $char(k)=0$, then we shall choose $j=1$. Set $x_{i_r}=1$ for each
 $r\ne j$. Since $u(1,1,\ldots,1,x_j,1,\ldots,1)=0$, it follows that
 $x_j^{i_j}\in V$. Let $n=i_j$. As $V$ is a $T$-space,
 it follows that $(x_j+1)^n\in V$. Since $k$ is infinite, every homogeneous
 component of $(x_j+1)^n=\sum_{i=1}^n \choice n,i x_j^i$ also belongs to $V$.
 In particular, $\choice n,1 x_j\in V$, and since $\choice n,1=n\ne0$ in $k$, we 
 conclude that $x_j\in V$. Thus $V=\konex$, which proves that every proper 
 $T$-space of $\konex$ is contained in $Y$, as required.
\end{proof}

 Thus every $T$-ideal of $\konex$ is contained in $Y$, and $Y$ is not
 a $T$-ideal of $\konex$ (since $k\subseteq Y$).

 Now suppose that $k$ is a finite field of order $q$ and characteristic
 $p$. In this case, $\konex$ has maximum $T$-ideal equal to
 $T^{(2)}+\set x-x^q\endset^T$. As in the preceding infinite field case,
 the maximum $T$-ideal is not a maximal $T$-space, as $k+T^{(2)}+\set
 x-x^q\endset^T$ is a proper $T$-space containing it.

\begin{proposition} 
 If $k$ is a finite field of order $q$, then $W=k+T^{(2)}+\set x-x^q\endset^T$ 
 is a maximal $T$-space of $\konex$.
\end{proposition}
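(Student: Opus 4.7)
The plan is to prove maximality by showing that every $T$-space $V$ with $W\subsetneq V$ satisfies $V=\konex$; first, though, I would verify $W$ is proper. Supposing $x\in W$ would give $x=c+u+v$ with $c\in k$, $u\in T^{(2)}$, and $v\in\set x-x^q\endset^T$. Evaluating at $x=\alpha$ for any $\alpha\in k$ kills $u$ (a commutator specialized at one variable vanishes) and, because $\alpha^q=\alpha$ in $k$, kills $v$ as well; thus $\alpha=c$ for every $\alpha\in k$, a contradiction.

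For maximality, let $f\in V\setminus W$. Since $k\subseteq W$, I may subtract the constant term of $f$ and assume it has none. The standard Vandermonde ($q$-homogeneity) argument applied to scalar substitutions $x\mapsto\lambda x$ ($\lambda\in k^*$) produces each $q$-homogeneous component of $f$ inside $V$, so I may further replace $f$ by a single $q$-homogeneous component still outside $W$. Modulo $W$ I may reason commutatively (since $T^{(2)}\subseteq W$) and reduce every positive exponent to lie in $\set 1,\dots,q-1\endset$ (since $x^q-x\in W$). Because all monomials of $f$ share the same degree profile $\pmod{q-1}$, these reductions collapse $f\pmod{W}$ to $\alpha\, x_1^{r_1}\cdots x_t^{r_t}$ for some $\alpha\in k$ and $r_j\in\set 1,\dots,q-1\endset$. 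Since $f\notin W$, $\alpha\ne 0$; substituting $x_j\mapsto 1$ for $j\ge 2$ (valid because $\konex$ is unitary) yields $x^r\in V$ for some variable $x$ and $r\in\set 1,\dots,q-1\endset$.

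The crux, and the step I expect to be the main obstacle, is extracting $x$ from $x^r$. I would write $r=p^s r'$ with $\gcd(r',p)=1$ and use the unitary endomorphism $x\mapsto\lambda x+1$ for $\lambda\in k^*$. By Frobenius,
\[
 (\lambda x+1)^r=(1+\lambda^{p^s}x^{p^s})^{r'}=\sum_{j=0}^{r'}\binom{r'}{j}\lambda^{jp^s}x^{jp^s}\in V.
\]
Since $r'\le q-1$ and $\gcd(p^s,q-1)=1$, the only $j\in\set 0,\dots,r'\endset$ for which $(j-1)p^s\equiv 0\pmod{q-1}$ is $j=1$; hence the Vandermonde selector $\sum_{\lambda\in k^*}\lambda^{-p^s}(\lambda x+1)^r$ collapses to $-r'x^{p^s}$, and since $r'$ is a unit in $k$, $x^{p^s}\in V$. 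Writing $q=p^m$, the endomorphism $x\mapsto x^{p^{m-s}}$ then produces $x^q\in V$, and $x^q-x\in W$ finally forces $x\in V$. Therefore $V=\konex$, establishing the maximality of $W$.
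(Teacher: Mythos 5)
Your argument reproduces the paper's own proof almost step for step: the same reduction of exponents into $\{1,\dots,q-1\}$ via $x^q\equiv x\pmod{W}$, the same passage to a $q$-homogeneous component and then (modulo $T^{(2)}$) to a single monomial, the same substitution of $1$ for all variables but one to reach $x^r\in V$, and the same binomial/Frobenius extraction of $x^{p^s}$ followed by $x\mapsto x^{p^{m-s}}$ to land on $x^q\equiv x$. The explicit properness check by evaluating at points of $k$ is a welcome addition, since the paper only asserts that $W$ is proper.

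The one step that deserves scrutiny --- in your write-up and equally in the paper's --- is the claim that the reductions collapse a $q$-homogeneous $f$ to a single monomial. A variable may occur with degree $0$ in one monomial and degree $q-1$ in another; both residues are $0\bmod(q-1)$, so $q$-homogeneity does not force a single monomial. Concretely, over $\gftwo$ the element $f=xy+x+y$ is $q$-homogeneous (vacuously, as $q-1=1$), has all exponents at most $q-1$, is not congruent to a monomial modulo $W$, and lies outside $W$ (as a function on $\gftwo^2$ it is nonconstant, while every element of $W$ evaluates to a constant); moreover, setting either variable to $1$ returns a constant, so the next step of the argument also produces nothing. (Here one instead recovers $x$ from the substitution $y\mapsto x$, which gives $x^2+2x=x^2\equiv x\pmod{W}$.) Since the paper makes exactly the same unqualified assertion (``$f$ is $q$-homogeneous, and thus $f$ is a monomial''), your proposal is correct precisely to the extent the published argument is; both are repaired by treating the exponent classes $0$ and $q-1$ separately, or more cleanly by passing to the quotient $\konex/(T^{(2)}+\set x-x^q\endset^T)$, in which elements in $n$ variables are functions $k^n\to k$, $W$ becomes the constants, and a one-variable nonconstant function is obtained from any nonconstant one by specializing all but one coordinate.
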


\begin{proof}
 Let $f\notin W$, and let $U=W+\set f\endset^S$. We prove that
 $U=\konex$. Note that 
 $x^i\cong x^{q+i-1}\mod{W}$ for every positive integer
 $i$. Thus we may assume that in every monomial $u$ of $f$, each
 variable has degree at most $q-1$. We may also assume that $f$ is
 $q$-homogeneous, and thus $f$ is a monomial (since $T^{(2)}\subseteq W$). 
 Choose one variable that appears in $u$ and set all other variables equal 
 to 1 to obtain that for some $x\in X$ and some positive
 integer $i$, $x^i\in U$. Let $i=p^tm$ where 
 $(p,m)=1$. Then
 $(x+1)^i=(x^{p^t}+1)^m\in U$, and so each homogeneous component of  
 $(x^{p^t}+1)^m$ belongs to $U$ as well. In particular, $mx^{p^t}\in U$, and
 since $m\ne 0$ in $k$, we have $x^{p^t}\in U$. But then for every $j$,
 $x^{jp^t}\in U$. Choose $j$ such that $jp^t=q^r$ for some positive
 integer $r$. Then $x^{q^r}=x^{jp^t}\in U$. But $x^{q^r}\cong
 x\mod{W}$ and thus $x^{q^r}\cong x\mod{U}$, which means that $x\in U$.
\end{proof} 

 In the discussion of the unitary case $\konex$, we shall frequently consider
 $U\subseteq \kzerox$ and wish to compare the $T$-space generated by $U$ in
 $\kzerox$, which we shall now denote by $U^{S_0}$, to the $T$-space
 generated by $U$ in $\konex$, which we shall denote by $U^S$.

 In Definition \ref{definition: def of vn}, for each $n\ge0$, we defined $V_n=\set
 x+x^{q^{2^n}}\endset^{S_0}\subseteq \onekzerox$. 
 
\begin{definition}  
 In $\konex$, for each $n\ge1$, let $\unitaryv{n}=T^{(2)}+\set x+x^{q^{2^n}}\endset^{S}$. 
\end{definition}

 Note that for each $n\ge0$, $\unitaryv{n}=k+T^{(2)}+\set x+x^{q^{2^n}}\endset^{S_0}$, and so
 $\unitaryv{n}$ is a proper $T$-space of $\konex$.
 
\begin{proposition}\label{proposition: p>2 k<X> has infinitely many maxl tspaces}
 Let $k$ be a finite field of order $q$ and characteristic $p>2$. Then
 for any $m,n\ge0$ with $m\ne n$, $\unitaryv{m}+\unitaryv{n}=\konex$.
\end{proposition}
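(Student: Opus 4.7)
The plan is to reduce this statement directly to Corollary~\ref{corollary: sum is all}, which is the analogous identity $V_n+V_m=\onekzerox$ in the one-variable nonunitary setting. Fix any $x\in X$, so that $\onekzerox=k[x]_0$ is naturally embedded in $\konex$.

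First I would observe that every $k$-algebra endomorphism $\varphi$ of $\onekzerox$ extends to a $k$-algebra endomorphism $\widetilde{\varphi}$ of $\konex$: define $\widetilde{\varphi}(x)=\varphi(x)\in\onekzerox\subseteq\konex$, send each $y\in X-\set x\endset$ to $0$, and send $1$ to $1$. Consequently, for any $f\in\onekzerox$, the $T$-space $\set f\endset^{S_0}$ computed in $\onekzerox$ is contained in the $T$-space $\set f\endset^{S}$ computed in $\konex$. Applied to $f=x+x^{q^{2^n}}$ this gives $V_n\subseteq\set x+x^{q^{2^n}}\endset^{S}\subseteq\unitaryv{n}$ for every $n\ge 0$.

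Since $p>2$, Corollary~\ref{corollary: sum is all} yields $V_n+V_m=\onekzerox$ whenever $n\ne m$; in particular $x\in V_n+V_m$. Combined with the inclusion $V_n\subseteq\unitaryv{n}$ just noted, this places the free generator $x$ inside $\unitaryv{n}+\unitaryv{m}$. Now $\unitaryv{n}+\unitaryv{m}$ is a $T$-space of $\konex$, and for any $g\in\konex$ there is a $k$-algebra endomorphism $\sigma$ of $\konex$ with $\sigma(x)=g$ (for instance, fixing every other generator); by $T$-invariance, $g=\sigma(x)\in\unitaryv{n}+\unitaryv{m}$. Hence $\unitaryv{n}+\unitaryv{m}=\konex$, as required.

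I do not foresee any real obstacle; the entire substantive work has already been done in the nonunitary characteristic $p>2$ case, and the only point to verify here is that the inclusion $\onekzerox\hookrightarrow\konex$ is compatible with $T$-space generation, which is immediate from the extension-of-endomorphisms observation above.
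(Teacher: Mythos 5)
Your proof is correct and follows essentially the same route as the paper, which likewise deduces the result directly from Corollary~\ref{corollary: sum is all} via the containment $V_n\subseteq\unitaryv{n}$; you have merely written out the extension-of-endomorphisms and $T$-invariance steps that the paper leaves implicit.
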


\begin{proof}  
 By Corollary \ref{corollary: sum is all}, for any positive
 integers $m,n$ with $m\ne n$, in $\onekzerox$ we have $V_n+V_m=\onekzerox$
 and so $\unitaryv{m}+\unitaryv{n}=\konex$. 
\end{proof} 

The proof of the following corollary is similar to the corresponding result in the
nonunitary case and is therefore omitted.

\begin{corollary}\label{corollary: unitary case, p>2}
 If $k$ is a finite field of characteristic $p>2$, then $\konex$ has infinitely 
 many maximal $T$-spaces.
\end{corollary}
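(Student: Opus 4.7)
The plan is to follow the nonunitary template essentially verbatim, since every ingredient needed has already been assembled. First, I would invoke Lemma \ref{lemma: free have maximal}: for each $n\ge 0$, $\unitaryv{n}$ is a proper $T$-space of $\konex$ (as noted right after its definition), so there exists a maximal $T$-space $M_n$ of $\konex$ with $\unitaryv{n}\subseteq M_n$. This gives a function $n\mapsto M_n$ from $\{0,1,2,\ldots\}$ into the set of maximal $T$-spaces of $\konex$.

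The remaining step is to verify that this function is injective. Suppose, for a contradiction, that $M_n=M_m$ for some $m\ne n$. Then both $\unitaryv{n}$ and $\unitaryv{m}$ are contained in the single proper $T$-space $M_n$, so $\unitaryv{n}+\unitaryv{m}\subseteq M_n$. By Proposition \ref{proposition: p>2 k<X> has infinitely many maxl tspaces}, however, $\unitaryv{n}+\unitaryv{m}=\konex$, which would force $M_n=\konex$, contradicting the fact that $M_n$ is a maximal (and hence proper) $T$-space. Therefore the $M_n$ are pairwise distinct, exhibiting infinitely many maximal $T$-spaces of $\konex$.

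There is no real obstacle to overcome: all the substantive work has been done in Lemma \ref{lemma: free have maximal}, in establishing the properness of each $\unitaryv{n}$, and in Proposition \ref{proposition: p>2 k<X> has infinitely many maxl tspaces}. The argument here is simply the standard Zorn-plus-contradiction assembly of those ingredients, which mirrors the proof of Corollary \ref{corollary: p>2 has infinitely many} in the nonunitary setting, and this is presumably why the authors chose to omit it.
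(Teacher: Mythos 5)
Your proof is correct and is exactly the argument the paper has in mind: it omits the proof as ``similar to the corresponding result in the nonunitary case,'' namely Corollary \ref{corollary: p>2 has infinitely many}, and your assembly of Lemma \ref{lemma: free have maximal}, the properness of each $\unitaryv{n}$, and Proposition \ref{proposition: p>2 k<X> has infinitely many maxl tspaces} is precisely that template transplanted to $\konex$.
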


 It remains to examine the situation when $p=2$. Assume now that $k$ is
 a field of order $q$ and characteristic $2$. Recall that in
 Definition \ref{definition: gen wn} for each positive integer $n$, we have defined
 $W_n=\set x+x^q,x^{q^n+1}\endset^{S_0}$ in $\onekzerox$.

\begin{proposition}\label{proposition: wn is wns}
 For each $n\ge1$, $W_{n}^{S}=k+W_{n}^{S_0}$.
\end{proposition}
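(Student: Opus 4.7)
The plan is to verify the two inclusions of $W_n^S=k+W_n^{S_0}$ separately. For $\supseteq$, every $k$-algebra endomorphism of $\onekzerox$ extends to a unital $k$-algebra endomorphism of $\onekonex$ by fixing $1$, so $W_n^{S_0}\subseteq W_n^S$; and the unital endomorphism of $\onekonex$ sending $x\mapsto 1$ takes $x^{q^n+1}$ to $1$, whence $1\in W_n^S$ and therefore $k\subseteq W_n^S$.

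For $\subseteq$, I will show that for every $g\in\onekonex$, both $g+g^q$ and $g^{q^n+1}$ lie in $k+W_n^{S_0}$. Since these are the images of the defining generators $x+x^q$ and $x^{q^n+1}$ of $W_n^S$ under the endomorphism $x\mapsto g$, and $k+W_n^{S_0}$ is a $k$-linear subspace, this will suffice. I write $g=\alpha+h$ with $\alpha\in k$ and $h\in\onekzerox$. The characteristic~$2$ Frobenius identity gives $(\alpha+h)^{q^m}=\alpha^{q^m}+h^{q^m}$ for every $m\ge 0$, so
\[
 g+g^q=(\alpha+\alpha^q)+(h+h^q),
\]
and $h+h^q$ is the image of $x+x^q$ under the endomorphism of $\onekzerox$ sending $x\mapsto h$. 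Hence $g+g^q\in k+W_n^{S_0}$.

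The substantive step is the treatment of $g^{q^n+1}$. Expanding via Lemma \ref{lemma: fermat property},
\[
 g^{q^n+1}=\alpha^{q^n+1}+h^{q^n+1}+\alpha\,h^{q^n}+\alpha^{q^n}\,h.
\]
The first term is in $k$, and the second, being the image of $x^{q^n+1}$ under $x\mapsto h$, lies in $W_n^{S_0}$. The cross term $\alpha h^{q^n}+\alpha^{q^n}h$ is what I expect to be the main obstacle, since it is not obviously the image of a single generator. The key observation is that every $\alpha\in k=\gf q$ satisfies $\alpha^{q^n}=\alpha$ (by iteration of Fermat), so the cross term collapses to $\alpha(h+h^{q^n})$. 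I will then telescope $h+h^{q^n}=\sum_{i=0}^{n-1}(h^{q^i}+h^{q^{i+1}})$, where the intermediate terms $2h^{q^i}$ vanish in characteristic $2$, and observe that each summand is the image of $x+x^q$ under the endomorphism $x\mapsto h^{q^i}$ of $\onekzerox$, which is a legitimate endomorphism because $h^{q^i}\in\onekzerox$. Thus $h+h^{q^n}\in W_n^{S_0}$, hence $\alpha(h+h^{q^n})\in W_n^{S_0}$, and altogether $g^{q^n+1}\in k+W_n^{S_0}$.
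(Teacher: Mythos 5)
Your proof is correct and follows essentially the same route as the paper's: decompose $g=\alpha+h$, use the Frobenius identity and $\alpha^{q^n}=\alpha$ to reduce everything to $k$ plus images of the generators under substitutions $x\mapsto h$ landing in $\onekzerox$. You even supply the telescoping justification for $h+h^{q^n}\in\set x+x^q\endset^{S_0}$, a step the paper leaves implicit.
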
 

\begin{proof}
 Let $n\ge1$. Evidently, we have $k+W_{n}^{S_0}\subseteq W_{n}^{S}$.
 For any $\alpha\in k$ and any $u\in \kzerox$, we have $(\alpha+u)
 +(\alpha+u)^q=\alpha+u+\alpha^q+u^q=2\alpha+(u+u^q)\in k+ W_n^{S_0}$, and
 $(\alpha+u)^{q^n+1}=(\alpha+u)^{q^n}(\alpha+u)=(\alpha+u^{q^n})(\alpha+u)
 =\alpha^2+\alpha u+\alpha u^{q^n}+u^{q^n+1}$. 
 Now $\alpha u+\alpha u^{q^n}\in \set x+x^q\endset^{S_0}$,
 and $u^{q^n+1}\in \set x^{q^n+1}\endset^{S_0}$, 
 so $(\alpha+u)^{q^n+1}\in k+W_{n}^{S_0}$. Thus  
 $W_{n}^{S}\subseteq k+W_{n}^{S_0}$, and so equality prevails. 
\end{proof} 

\begin{definition} 
 For each positive integer $n$, let $\unitaryw{n}=W_{n}^S$ in $\konex$.
\end{definition}

 By Corollary \ref{corollary: gen proper}, for any integer $n\ge1$, $W_{n}$ is a
 proper $T$-space of $\onekzerox$, and thus $\unitaryw{n}$ is a proper
 $T$-space of $\konex$. In particular, for each $n\ge0$,
 $\unitaryw{q^n}$ is a proper $T$-space of $\konex$.

\begin{proposition}\label{proposition: unitary gen infinite number}
 Let $n,m$ be nonnegative integers with $n\ne m$. Then $\unitaryw{q^n}+\unitaryw{q^m}=\konex$.
\end{proposition}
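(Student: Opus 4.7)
The plan is to lift the non-unitary identity from Proposition \ref{proposition: gen infinite number} into the unitary setting via the bridge provided by Proposition \ref{proposition: wn is wns}, in direct analogy with how Proposition \ref{proposition: p>2 k<X> has infinitely many maxl tspaces} was handled in the characteristic $p>2$ case. No new combinatorics on exponents of $q$ is needed; the proof is essentially bookkeeping.

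First I would apply Proposition \ref{proposition: wn is wns} to rewrite both summands and collapse the two copies of $k$:
$$
 \unitaryw{q^n}+\unitaryw{q^m}=(k+W_{q^n}^{S_0})+(k+W_{q^m}^{S_0})=k+W_{q^n}^{S_0}+W_{q^m}^{S_0}.
$$
Next I would observe that $W_{q^n}^{S_0}$ is by definition the $T$-space of $\kzerox$ generated by $W_{q^n}$, so $W_{q^n}\subseteq W_{q^n}^{S_0}$, and similarly $W_{q^m}\subseteq W_{q^m}^{S_0}$. Combined with Proposition \ref{proposition: gen infinite number}, which states $W_{q^n}+W_{q^m}=\onekzerox$ for $n\ne m$, this gives
$$
 W_{q^n}^{S_0}+W_{q^m}^{S_0}\supseteq W_{q^n}+W_{q^m}=\onekzerox,
$$
so in particular the distinguished variable $x\in X$ lies in $W_{q^n}^{S_0}+W_{q^m}^{S_0}$.

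To close the argument I would invoke the standard observation that any $T$-space of $\kzerox$ containing some variable $x\in X$ is already all of $\kzerox$: for any $f\in\kzerox$, the freeness of $\kzerox$ on $X$ furnishes a $k$-algebra endomorphism sending $x\mapsto f$ (and every other generator to $0$), so the $T$-space orbit of $x$ exhausts $\kzerox$. Therefore $W_{q^n}^{S_0}+W_{q^m}^{S_0}=\kzerox$, and using the vector-space decomposition $\konex=k\oplus\kzerox$ inherent in freeness,
$$
 \unitaryw{q^n}+\unitaryw{q^m}=k+\kzerox=\konex,
$$
as required.

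The proof should present essentially no obstacle, since all of the hard arithmetic is packaged in Proposition \ref{proposition: gen infinite number}. The only small details to check are that the indices $q^n,q^m\ge 1$ (immediate from $n,m\ge 0$) so that Proposition \ref{proposition: wn is wns} is in force, and that the containment $W_{q^n}\subseteq W_{q^n}^{S_0}$ used above is a tautology from the definition of $S_0$-closure since $W_{q^n}\subseteq\onekzerox\subseteq\kzerox$.
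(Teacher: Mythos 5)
Your proof is correct and follows essentially the same route as the paper: rewrite each $\unitaryw{q^i}$ as $k+W_{q^i}^{S_0}$ via Proposition \ref{proposition: wn is wns}, then invoke Proposition \ref{proposition: gen infinite number} to conclude. The only difference is that you explicitly justify the passage from $W_{q^n}+W_{q^m}=\onekzerox$ to $W_{q^n}^{S_0}+W_{q^m}^{S_0}=\kzerox$ (a $T$-space of $\kzerox$ containing the variable $x$ is all of $\kzerox$), a step the paper leaves implicit.
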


\begin{proof}
 By Proposition \ref{proposition: wn is wns}, $\unitaryw{q^n}+\unitaryw{q^m}=
 k+{W}_{q^n}^{S_0}+k+{W}_{q^m}^{S_0}= k+{W}_{q^n}^{S_0}+{W}_{q^m}^{S_0}$, and 
 by Proposition \ref{proposition: gen infinite number}, ${W}_{q^n}^{S_0}+{W}_{q^m}^{S_0}=\kzerox$,
 so $\unitaryw{q^n}+\unitaryw{q^m}=k+\kzerox=\konex$.
\end{proof}
  
\begin{corollary}  
 Let $k$ be a finite field of characteristic 2. Then $\konex$ 
 has infinitely many maximal $T$-spaces.
\end{corollary}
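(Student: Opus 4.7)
The plan is to mimic exactly the argument used in Corollary \ref{corollary: gen inf many max t-spaces} for the nonunitary case, with $\unitaryw{q^n}$ in place of $W_{q^n}$. All the real work has already been done: Proposition \ref{proposition: unitary gen infinite number} supplies the crucial fact that $\unitaryw{q^n}+\unitaryw{q^m}=\konex$ whenever $n\ne m$, and the remark immediately preceding that proposition records that each $\unitaryw{q^n}$ is a proper $T$-space of $\konex$ (obtained from the corresponding fact for $W_{q^n}$ via Proposition \ref{proposition: wn is wns} together with Corollary \ref{corollary: gen proper}).

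First, I would invoke Lemma \ref{lemma: free have maximal} applied to the free unitary associative $k$-algebra $\konex$: since every proper $T$-space of $\konex$ is contained in a maximal $T$-space, for each integer $n\ge0$ we may select a maximal $T$-space $M_n$ of $\konex$ with $\unitaryw{q^n}\subseteq M_n$. Next, I would show that the assignment $n\mapsto M_n$ is injective. Suppose for contradiction that $M_n=M_m$ for some $n\ne m$. Then both $\unitaryw{q^n}$ and $\unitaryw{q^m}$ are contained in $M_n$, so $\unitaryw{q^n}+\unitaryw{q^m}\subseteq M_n$. But Proposition \ref{proposition: unitary gen infinite number} gives $\unitaryw{q^n}+\unitaryw{q^m}=\konex$, forcing $M_n=\konex$, which contradicts the fact that $M_n$ is a (proper) maximal $T$-space.

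Thus the $M_n$, $n\ge0$, are pairwise distinct maximal $T$-spaces of $\konex$, which exhibits infinitely many maximal $T$-spaces. I do not anticipate any genuine obstacle here, since the argument is a verbatim translation of the nonunitary proof; the only thing worth double-checking is that Lemma \ref{lemma: free have maximal} applies equally to $\konex$ (which it does, as the lemma is stated for free associative or free commutative associative $k$-algebras on a nonempty set), and that the properness of each $\unitaryw{q^n}$ has indeed been established earlier in the section.
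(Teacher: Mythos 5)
Your proof is correct and follows essentially the same route as the paper's: pick a maximal $T$-space $M_n\supseteq \unitaryw{q^n}$ for each $n$ (existence via the Zorn-type argument of Lemma \ref{lemma: free have maximal}), then use Proposition \ref{proposition: unitary gen infinite number} to see that no two of the $M_n$ can coincide. Your explicit check that the lemma applies to the unitary algebra and that properness of each $\unitaryw{q^n}$ was established earlier is exactly what the paper relies on implicitly.
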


\begin{proof} 
 Let $k$ have order $q$. We have observed above that for each $n\ge0$,
 $\unitaryw{q^n}$ is a proper $T$-space of $\konex$, and by
 Proposition \ref{proposition: unitary gen infinite number}, for $m\ne n$, no maximal
 $T$-space of $\konex$ contains both $\unitaryw{q^m}$ and
 $\unitaryw{q^n}$. Thus $\konex$ has infinitely many maximal $T$-spaces.
\end{proof}

\end{document}